\renewenvironment{proof}[1][Proof]{\textbf{#1.} }
{\ \rule{0.5em}{0.5em}}
\DeclareMathOperator{\Ad}{Ad}
\DeclareMathOperator{\diag}{diag}
\DeclareMathOperator{\Ric}{Ric}
\renewcommand{\arraystretch}{1.5}
\newtheorem{theorem}{Theorem}
\newtheorem{pred}{Proposition}
\newtheorem{lem}{Lemma}
\newtheorem{cor}{Corollary}
\newtheorem{remark}{Remark}
\newtheorem{ques}{Question}
\begin{document}

\title[Invariant Einstein metrics on generalized Wallach spaces]
{Invariant Einstein metrics on generalized Wallach spaces}

\author{Zhiqi~Chen}
\address{Zhiqi~Chen\newline
School of Mathematical Sciences and LPMC, Nankai University, \newline
Tianjin 300071, China}
\email{chenzhiqi@nankai.edu.cn}

\author{Yuri\u{i}~Nikonorov}
\address{Yu.\,G. Nikonorov \newline
Southern Mathematical Institute of Vladikavkaz Scientific Centre \newline
of the Russian Academy of Sciences, Vladikavkaz, Markus st. 22, \newline
362027, Russia}
\email{nikonorov2006@mail.ru}

\thanks{The project was supported in part by Grant 1452/GF4 of Ministry of Education and Sciences of the Republic of Kazakhstan
for 2015-2017 (Agreement N 299, February 12, 2015).}

\begin{abstract}
Invariant Einstein metrics on generalized Wallach spaces have been classified except $SO(k+l+m)/SO(k)\times SO(l)\times SO(m)$. In this paper,
we give a survey on the study of invariant Einstein metrics on generalized
Wallach spaces, and prove that there are infinitely many spaces of the type $SO(k+l+m)/SO(k)\times SO(l)\times SO(m)$
admitting exactly two, three, or four invariant Einstein metrics up to a homothety.

\vspace{2mm} \noindent Key word and phrases:
compact homogeneous space, generalized Wallach space, symmetric space, homogeneous Riemannian metric, Einstein metric, Ricci flow.

\vspace{2mm}

\noindent {\it 2010 Mathematics Subject Classification:} 53C20, 53C25, 53C30.
\end{abstract}

\maketitle

\section{Introduction}
This paper is devoted to the study of invariant Einstein metrics on generalized Wallach spaces,
a remarkable class of compact homogeneous spaces which were introduced in \cite{Nikonorov2}.
They werecalled three-locally-symmetric spaces there, here we prefer to use the term generalized Wallach spaces as \cite{Nikonorov1}.
\smallskip

Let $G$ a connected compact semisimple Lie group $G$ and $H$ its compact subgroup. Denote by $\mathfrak{g}$ and $\mathfrak{h}$ Lie algebras of $G$ and $H$ respectively. Assume that the compact homogeneous space $G/H$ is almost effective, i.~e. there is no non-trivial ideal of the Lie algebra $\mathfrak{g}$ in $\mathfrak{h} \subset\mathfrak{g}$. Naturally, the Killing form $B=B(\boldsymbol{\cdot}\,,\boldsymbol{\cdot})$ of $\mathfrak{g}$ is negative definite.
It follows that $\langle\boldsymbol{\cdot}\,,\boldsymbol{\cdot}\rangle:=-B(\boldsymbol{\cdot}\,,\boldsymbol{\cdot})$ is a positive definite inner product
on $\mathfrak{g}$. Let $\mathfrak{p}$ be the $\langle\boldsymbol{\cdot}\,,\boldsymbol{\cdot}\rangle$-orthogonal complement to $\mathfrak{h}$ in $\mathfrak{g}$.
Therefore $\mathfrak{p}$ is $\operatorname{Ad}(H)$-invariant (and $\operatorname{ad}(\mathfrak{h})$-invariant, in particular).
The module $\mathfrak{p}$ is naturally identified with the tangent space to $G/H$ at the point $eH$.
Every $G$-invariant Riemannian metric on~$G/H$ generates an~$\operatorname{Ad}(H)$-invariant
inner product on~$\mathfrak{p}$ and vice versa. Hence, it is natural to
identify invariant Riemannian metrics on~$G/H$ with $\operatorname{Ad}(H)$-invariant inner
products on~$\mathfrak{p}$. Note that the Riemannian metric generated by the~inner product
$\langle\boldsymbol{\cdot}\,,\boldsymbol{\cdot}\rangle\bigr\vert_{\mathfrak{p}}$ is called
{\it Killing} or {\it standard}. A compact homogeneous space~$G/H$ is called a {\it generalized Wallach space} if the module $\mathfrak{p}$
is decomposed as a direct sum of three
$\operatorname{Ad}(H)$-invariant irreducible modules pairwise orthogonal
with respect to~$\langle\boldsymbol{\cdot}\,,\boldsymbol{\cdot}\rangle$, i.~e.
$\mathfrak{p}=\mathfrak{p}_1\oplus \mathfrak{p}_2\oplus \mathfrak{p}_3$,
such that
$[\mathfrak{p}_i,\mathfrak{p}_i]\subset \mathfrak{h}$ for $i\in\{1,2,3\}$.
\smallskip

There are many examples of generalized Wallach spaces, such as the manifolds of complete flags in the complex, quaternionic, and Cayley projective planes:
$SU(3)/T_{\max}$, $Sp(3)/Sp(1)\times Sp(1)\times Sp(1)$, and $F_4/Spin(8)$, which are known as {\it Wallach spaces};
the various K\"ahler $C$-spaces $SU(n_1+n_2+n_3)\big/S\big(U(n_1)\times U(n_2)\times U(n_3)\big)$, $
SO(2n)/U(1)\times U(n-1)$, and $E_6/U(1)\times U(1)\times Spin(8)$.
The classification of generalized Wallach spaces  $G/H$ is obtained in \cite{Nikonorov4}, and the part for simple Lie groups $G$ is also given in \cite{CKL}.

\begin{theorem}[\cite{Nikonorov4}]\label{classgws}
A simply connected generalized Wallach space $G/H$ is exactly one of the following cases:
\begin{enumerate}
 \item $G/H$ is a direct product of three irreducible symmetric spaces of compact type;
 \item The group $G$ is simple and the pair $(\mathfrak{g}, \mathfrak{h})$ is one of the pairs in Table 1;
 \item $G=F\times F\times F \times F$ and $H=\operatorname{diag}(F)\subset G$ for some connected simply connected compact simple Lie group $F$,
with the following description on the Lie algebra level:
$$
(\mathfrak{g}, \mathfrak{h})= \bigl(\mathfrak{f}\oplus \mathfrak{f}\oplus\mathfrak{f}\oplus\mathfrak{f},\,
\operatorname{diag}(\mathfrak{f})=\{(X,X,X,X)\,|\, X\in \mathfrak{f}\}\bigr),
$$
where $\mathfrak{f}$ is the Lie algebra of $F$, and {\rm (}up to permutation{\rm)}
$\mathfrak{p}_1=\{(X,X,-X,-X)\,|\, X\in \mathfrak{f}\}$, \linebreak
$\mathfrak{p}_2\!=\!\{(X,-X,X,-X)\,|\, X\in \mathfrak{f}\}$,\!
$\mathfrak{p}_3\!=\!\{(X,-X,-X,X)\,|\, X\in \mathfrak{f}\}$.
\end{enumerate}
\end{theorem}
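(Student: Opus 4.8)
The plan is to reformulate the defining conditions as a $\mathbb{Z}_2\times\mathbb{Z}_2$-grading and then run a structural classification. First I would record the extra bracket relations forced by the definition. Using the $\operatorname{ad}(\mathfrak{h})$-invariance of each $\mathfrak{p}_i$, the inclusions $[\mathfrak{p}_i,\mathfrak{p}_i]\subseteq\mathfrak{h}$, and the associativity identity $B([X,Y],Z)=B(X,[Y,Z])$, a short computation gives $[\mathfrak{p}_i,\mathfrak{p}_j]\subseteq\mathfrak{p}_k$ whenever $\{i,j,k\}=\{1,2,3\}$: indeed $[\mathfrak{p}_i,\mathfrak{p}_j]$ is $\operatorname{ad}(\mathfrak{h})$-invariant and orthogonal to $\mathfrak{h}$, $\mathfrak{p}_i$ and $\mathfrak{p}_j$. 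Hence $\mathfrak{g}=\mathfrak{h}\oplus\mathfrak{p}_1\oplus\mathfrak{p}_2\oplus\mathfrak{p}_3$ is a $\mathbb{Z}_2\times\mathbb{Z}_2$-grading, equivalently $\mathfrak{g}$ carries three commuting involutive automorphisms $\sigma_1,\sigma_2,\sigma_3=\sigma_1\sigma_2$ with $\sigma_i=+\Id$ on $\mathfrak{h}\oplus\mathfrak{p}_i$ and $-\Id$ on $\mathfrak{p}_j\oplus\mathfrak{p}_k$. In particular each $(\mathfrak{g},\mathfrak{h}\oplus\mathfrak{p}_i)$ is a symmetric pair and $\mathfrak{h}=\mathfrak{g}^{\sigma_1}\cap\mathfrak{g}^{\sigma_2}$. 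This reduces the problem to classifying such triples of commuting involutions for which all three $(-1)$-eigencomponents $\mathfrak{p}_i$ are $\operatorname{ad}(\mathfrak{h})$-irreducible.

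Next I would decompose $\mathfrak{g}=\bigoplus_\alpha\mathfrak{g}_\alpha$ into simple ideals. The group $\Gamma=\{\Id,\sigma_1,\sigma_2,\sigma_3\}\cong\mathbb{Z}_2\times\mathbb{Z}_2$ permutes the set of ideals, and for a $\Gamma$-orbit $O$ the invariant block $\mathfrak{g}_O=\bigoplus_{\alpha\in O}\mathfrak{g}_\alpha$ contributes $(\mathfrak{g}_O)^{\Gamma}$ to $\mathfrak{h}$ and its $\chi_i$-isotypic part $(\mathfrak{g}_O)_{\chi_i}$ to $\mathfrak{p}_i$, where $\chi_1,\chi_2,\chi_3$ are the nontrivial characters of $\Gamma$. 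Since distinct ideals commute, $\mathfrak{h}$ acts on different blocks independently, so irreducibility of $\mathfrak{p}_i$ forces exactly one orbit to contribute to each $\mathfrak{p}_i$; almost effectiveness forbids an orbit contributing to no $\mathfrak{p}_i$ (such a block would be an ideal of $\mathfrak{g}$ contained in $\mathfrak{h}$). Thus there are at most three orbits, and I would organize the classification by the possible orbit sizes $1$, $2$, $4$.

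A size-$4$ orbit carries a simply transitive $\Gamma$-action; the block is then $\mathfrak{f}\oplus\mathfrak{f}\oplus\mathfrak{f}\oplus\mathfrak{f}$ with $\Gamma$ the Klein four-group of double transpositions, its $\Gamma$-fixed part is $\diag(\mathfrak{f})$, and each $(\mathfrak{g}_O)_{\chi_i}\cong\mathfrak{f}$ is irreducible, so such an orbit already accounts for all three modules and must be the whole of $\mathfrak{g}$: this is case~(3). A size-$1$ orbit is a single $\Gamma$-stable simple ideal $\mathfrak{s}$ with a genuine $\mathbb{Z}_2\times\mathbb{Z}_2$-grading; here I would use that for a symmetric pair $(\mathfrak{s},\mathfrak{k})$ with $\mathfrak{s}$ simple one has $[\mathfrak{m},\mathfrak{m}]=\mathfrak{k}$ (because $[\mathfrak{m},\mathfrak{m}]\oplus\mathfrak{m}$ is a nonzero ideal) to prove that $\mathfrak{s}$ cannot carry exactly two of the three modules: if $\mathfrak{s}_{\chi_3}=0$ but $\mathfrak{s}_{\chi_1},\mathfrak{s}_{\chi_2}\neq0$, then the isotropy of $(\mathfrak{s},\mathfrak{s}_{\chi_0}\oplus\mathfrak{s}_{\chi_1})$ is $\mathfrak{s}_{\chi_2}$ with $[\mathfrak{s}_{\chi_2},\mathfrak{s}_{\chi_2}]\subseteq\mathfrak{s}_{\chi_0}\subsetneq\mathfrak{s}_{\chi_0}\oplus\mathfrak{s}_{\chi_1}$, a contradiction. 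Hence such an ideal carries either one module (one irreducible symmetric pair, assembling over three ideals into case~(1)) or all three (a simple $\mathfrak{g}$ with an irreducible $\mathbb{Z}_2\times\mathbb{Z}_2$-grading, case~(2)). A size-$2$ orbit $\{\mathfrak{g}_1,\mathfrak{g}_2\}$ with $\mathfrak{g}_1\cong\mathfrak{g}_2\cong\mathfrak{f}$ must be treated separately by writing the swap and the stabilizer involution explicitly and pinning down the induced involution $\theta$ on each factor; when $\theta$ is trivial the orbit is exactly a type-II symmetric factor $(\mathfrak{f}\oplus\mathfrak{f},\diag(\mathfrak{f}))$ of case~(1), and the remaining configurations require a careful separate argument.

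I expect the main obstacle to be twofold. The structural half is the honest treatment of the size-$2$ orbits and, more generally, the interaction between irreducibility and non-simple fixed-point subalgebras, since a careless count produces spurious ``twisted diagonal'' candidates that must be examined and correctly disposed of; this is where the bookkeeping is most delicate. The computational half, which is the true heart of the matter, is the explicit determination of all $\mathbb{Z}_2\times\mathbb{Z}_2$-gradings of each compact simple Lie algebra for which the three $(-1)$-eigenspaces are \emph{simultaneously} $\operatorname{ad}(\mathfrak{h})$-irreducible. I would carry this out type by type over the classical and exceptional algebras, using the classification of involutions and symmetric subalgebras together with the branching of the isotropy representations; it is precisely this enumeration that yields the finite list recorded in Table~1.
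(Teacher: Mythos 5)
The paper itself contains no proof of this statement--it is quoted from \cite{Nikonorov4}--so the only comparison available is with the strategy of that source, and your strategy is the same one: promote $[\mathfrak{p}_i,\mathfrak{p}_i]\subset\mathfrak{h}$ to $[\mathfrak{p}_i,\mathfrak{p}_j]\subset\mathfrak{p}_k$ by associativity of the Killing form, read the decomposition as a $\mathbb{Z}_2\times\mathbb{Z}_2$-grading by three commuting involutions, and sort the orbits of the Klein group on the simple ideals by their length. The steps you actually carry out (the bracket relations, the count of orbits via irreducibility and almost effectiveness, the exclusion of a simple ideal meeting exactly two of the $\mathfrak{p}_i$ via $[\mathfrak{m},\mathfrak{m}]=\mathfrak{k}$) are correct.

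The genuine gap is the length-two orbits, which you defer to ``a careful separate argument''; carried out, that argument does not land back in cases (1)--(3). Normalizing as you indicate, such an orbit is $\mathfrak{f}\oplus\mathfrak{f}$ with $\sigma_1(x,y)=(y,x)$ and $\sigma_3(x,y)=(\theta x,\theta y)$ for an involutive automorphism $\theta$ of $\mathfrak{f}$; writing $\mathfrak{f}=\mathfrak{k}\oplus\mathfrak{m}$ for the symmetric pair of $\theta$, the fixed algebra is $\diag(\mathfrak{k})$ and the three graded pieces are $\{(x,-x)\,|\,x\in\mathfrak{k}\}$, $\{(x,x)\,|\,x\in\mathfrak{m}\}$, $\{(x,-x)\,|\,x\in\mathfrak{m}\}$. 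If $(\mathfrak{f},\mathfrak{k})$ is an irreducible symmetric pair with $\mathfrak{k}$ simple or one-dimensional, all three pieces are nonzero and $\operatorname{ad}(\mathfrak{h})$-irreducible, so a single length-two orbit carries all three modules and $\bigl(\mathfrak{f}\oplus\mathfrak{f},\diag(\mathfrak{k})\bigr)$ is a generalized Wallach space. Concretely, $\bigl(\mathfrak{su}(3)\oplus\mathfrak{su}(3),\diag(\mathfrak{so}(3))\bigr)$ satisfies every clause of the definition, has $[\mathfrak{p}_i,\mathfrak{p}_j]\neq 0$ for $i\neq j$ (so it is not a product of three symmetric spaces), has non-simple $\mathfrak{g}$ with only two simple ideals, and therefore appears in none of (1), (2), (3). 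So a complete execution of your plan would refute the statement as quoted rather than prove it; this is exactly the oversight repaired in the published correction to \cite{Nikonorov4}, where pairs $\bigl(\mathfrak{f}\oplus\mathfrak{f},\diag(\mathfrak{k})\bigr)$ are added as a fourth case. Separately, even granting the structural reduction, essentially all of the content of case (2)--the derivation of Table 1--is present in your write-up only as a declared intention to enumerate $\mathbb{Z}_2\times\mathbb{Z}_2$-gradings type by type, so what you have is an outline of the source's argument, not a proof.
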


{\small
\renewcommand{\arraystretch}{1.4}
\begin{table}[t]
{\bf Table 1.} The pairs $(\mathfrak{g},\mathfrak{h})$ corresponded to generalized Wallach spaces $G/H$ with $G$ simple.
\begin{center}
\begin{tabular}{|c|c|c|c|c|c|c|c|c|}
\hline
N& \ $\mathfrak{g}$& $\mathfrak{h}$ & $d_1$ &  $d_2$ & $d_3$  & $a_1$ &  $a_2$ & $a_3$ \\
\hline\hline
1&\!\!$so(k\!+\!l\!+\!m)$\!\!\!&\!\!$so(k)\!\oplus\! so(l) \!\oplus \!so(m)$\!\!& $kl$ & $km$ & $lm$ &
\!\!$\frac{m}{2(k+l+m-2)}$\!\!&\!\!$\frac{l}{2(k+l+m-2)}$\!\!&\!\!$\frac{k}{2(k+l+m-2)}$\!\!\\  \hline
2&\!\!$su(k\!+\!l\!+\!m)$\!\!\!&$\!\!s(u(k)\!\oplus \!u(l) \!\oplus\! u(m))$\!\!& $2kl$ & $2km$ & $2lm$ &
\!\!$\frac{m}{2(k+l+m)}$\!\!&\!\!$\frac{l}{2(k+l+m)}$\!\!&\!\!$\frac{k}{2(k+l+m)}$\!\!\\  \hline
3&\!\!$sp(k\!+\!l\!+\!m)$\!\!\!&\!\!$sp(k)\!\oplus \!sp(l) \!\oplus \!sp(m)$\!\!& $4kl$ & $4km$ & $4lm$ &
\!\!$\frac{m}{2(k+l+m+1)}$\!\!&\!\!$\frac{l}{2(k+l+m+1)}$\!\!&\!\!$\frac{k}{2(k+l+m+1)}$\!\!\\  \hline
4&$su(2l)$,\,$l\geq 2$ &$u(l)$&\!\!$l(l-1\!)$\!\!&\!\!$l(l+1\!)$\!\!&\!\!$l^2-1$\!\!&  $\frac{l+1}{4l}$ &
$\frac{l-1}{4l}$ &  $\frac{1}{4}$ \\  \hline
5&$so(2l)$,\,$l\geq 4$ &$u(1)\oplus u(l-1\!)$ &\!\!$2(l-1\!)$\!\! &\!\!$2(l-1\!)$\!\!&\!\!$(l\!-\!1)\!(l\!-\!2)$\!\!&  $\frac{l-2}{4(l-1)}$ &
$\frac{l-2}{4(l-1)}$ &  $\frac{1}{2(l-1)}$ \\  \hline
6&$e_6$ &$su(4)\oplus 2sp(1)\oplus \mathbb{R}$   &$16$ & $16$ & $24$ &  $\frac{1}{4}$ &  $\frac{1}{4}$ &  $\frac{1}{6}$ \\  \hline
7&$e_6$ &$so(8)\oplus \mathbb{R}^2$ & $16$ & $16$ & $16$ &  $\frac{1}{6}$ &  $\frac{1}{6}$ &  $\frac{1}{6}$ \\  \hline
8&$e_6$ &$sp(3)\oplus sp(1)$ & $14$ & $28$ & $12$ &  $\frac{1}{4}$ &  $\frac{1}{8}$ &  $\frac{7}{24}$ \\  \hline

9&$e_7$ &$so(8)\oplus 3sp(1)$ & $32$ & $32$ & $32$ &  $\frac{2}{9}$ &  $\frac{2}{9}$ &  $\frac{2}{9}$ \\  \hline
10&$e_7$ &$su(6)\oplus sp(1)\oplus \mathbb{R}$ & $30$ & $40$ & $24$ &  $\frac{2}{9}$ &  $\frac{1}{6}$ &  $\frac{5}{18}$ \\  \hline
11&$e_7$ &$so(8)$ & $35$ & $35$ & $35$ &  $\frac{5}{18}$ &  $\frac{5}{18}$ &  $\frac{5}{18}$ \\  \hline

12&$e_8$ &$so(12)\oplus 2sp(1)$ & $64$ & $64$ & $48$ &  $\frac{1}{5}$ &  $\frac{1}{5}$ &  $\frac{4}{15}$ \\  \hline
13&$e_8$ &$so(8)\oplus so(8)$ & $64$ & $64$ & $64$ &  $\frac{4}{15}$ &  $\frac{4}{15}$ &  $\frac{4}{15}$ \\  \hline

14&$f_4$ &$so(5)\oplus 2sp(1)$ & $8$ & $8$ & $20$ &  $\frac{5}{18}$ &  $\frac{5}{18}$ &  $\frac{1}{9}$ \\  \hline
15&$f_4$ &$so(8)$ & $8$ & $8$ & $8$ &  $\frac{1}{9}$ &  $\frac{1}{9}$ &  $\frac{1}{9}$ \\  \hline

\end{tabular}
\end{center}
\end{table}
\renewcommand{\arraystretch}{1}
}

The definitions and the properties of the numbers $d_i$ and $a_i$, $i=1,2,3$, in Table 1 are considered in the next section.
\smallskip

A Riemannian metric is Einstein if the Ricci curvature is a constant multiple of the metric.
Various results on Einstein manifolds could be found in the book \cite{Bes} of A.L.~Besse and in more recent surveys \cite{Nikonorov1, Wang1, Wang2}.

There are a lot of studies on invariant Einstein metrics on generalized Wallach spaces. The~invariant Einstein metrics on~$SU(3)/T_{\max}$
were classified in~\cite{Dat} and,
on $Sp(3)/Sp(1)\times Sp(1)\times Sp(1)$ and $F_4/Spin(8)$
in~\cite{Rod}. In~each of these~cases, there exist exactly
four invariant Einstein metrics (up to proportionality). The~invariant Einstein metrics on
$SU(n_1+n_2+n_3)\big/S\big(U(n_1)\times U(n_2)\times U(n_3)\big)$, $SO(2n)/U(1)\times U(n-1)$, and $E_6/U(1)\times U(1)\times Spin(8)$ were
classified in~\cite{Kim}. Each of these spaces admits four
invariant Einstein metrics (up to scalar), one of which is K\"ahler for an~appropriate
complex structure on~$G/H$. Another approach to
$SU(n_1+n_2+n_3)\big/S\big(U(n_1)\times U(n_2)\times U(n_3)\big)$ was used
in~\cite{Ar}. As a~generalized Wallach space, the~Lie group~$SU(2)$ \big($H=\{e\}$\big) admits only one left-invariant Einstein metric which is
a~metric of constant curvature~(see e.~g. \cite{Bes}). Recall that $SU(2)$ is locally isomorphic to  $SO(3)=SO(3)/SO(1)\times SO(1)\times SO(1)$.
\smallskip

In~\cite{Nikonorov2}, it was shown that every generalized Wallach space admits at least one invariant Einstein metric. Later in \cite{Lomshakov2},
a detailed study of invariant Einstein metrics was developed for all generalized Wallach spaces. In particular, it is proved that there are at
most four Einstein metrics (up to homothety) for every such space. In~\cite{CNN}, the authors classified all invariant Einstein metrics on
Ledger--Obata spaces $F^4/\diag(F)$, hence, Einstein metrics of generalized Wallach spaces in the item 3) of Theorem~\ref{classgws}. By the results from \cite{CKL}, \cite{Lomshakov2} and \cite{Nikonorov1}, all invariant Einstein metrics on generalized Wallach spaces in the item 2) of Theorem~\ref{classgws} except $SO(k+l+m)/SO(k)\times SO(l)\times SO(m)$ were classified.
\smallskip

In the recent papers \cite{AANS1,AANS2}, generalized Wallach spaces were studied from the point of view of the Ricci flow. Recall that singular points
of the normalized Ricci flow for a homogeneous space $G/H$ are exactly
invariant Einstein metrics on $G/H$. It allows us to do a further study about invariant Einstein metrics on $SO(k+l+m)/SO(k)\times SO(l)\times SO(m)$.
More or less complete general picture we get from Theorem~7 in \cite{AANS2} (see also Theorem 6 in \cite{AANS1}) and \cite{CKL}.
Other helpful facts were obtained in \cite[Section 6]{Nikonorov4}.

\smallskip

Our main results are the following two theorems.
\smallskip

\begin{theorem}\label{ems} Suppose that $k\geq l \geq m \geq 1$ and $l\geq 2$. Then
the number of invariant Einstein metrics on the space $G/H=SO(k+l+m)/SO(k)\times SO(l)\times SO(m)$ is
$4$ for $m> \sqrt{2k+2l-4}$  and  $2$ for $m<\sqrt{k+l}$,  up to a homothety.
\end{theorem}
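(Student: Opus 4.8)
The plan is to turn the Einstein condition into a root-counting problem for an explicit one-variable polynomial and then to read off the two regimes from its discriminant together with the signs of its coefficients.

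First, every invariant metric is determined up to scale by the triple $(x_1,x_2,x_3)$ of positive eigenvalues on $\mathfrak{p}_1,\mathfrak{p}_2,\mathfrak{p}_3$, and the principal Ricci curvatures have the well-known form $r_i=\frac{1}{2x_i}+\frac{a_i}{2}\bigl(\frac{x_i}{x_jx_k}-\frac{x_j}{x_ix_k}-\frac{x_k}{x_ix_j}\bigr)$, where $(i,j,k)$ runs over cyclic permutations of $(1,2,3)$ and the $a_i$ are as in Table 1. Since the system $r_1=r_2=r_3$ is invariant under the common scaling $x_i\mapsto c\,x_i$, I normalize $x_3=1$ and write $u=x_1$, $v=x_2$. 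Subtracting the Ricci components pairwise and clearing denominators turns $r_1=r_2$ and $r_2=r_3$ into two conics $F_1(u,v)=0$ and $F_2(u,v)=0$ whose coefficients are explicit linear expressions in $a_1,a_2,a_3$. An invariant Einstein metric up to homothety is then exactly a point $(u,v)$ with $u,v>0$ on the intersection of these two conics; by B\'ezout there are at most four such points, which recovers the known bound of at most four Einstein metrics.

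Next, I eliminate $v$ by taking the resultant of $F_1$ and $F_2$ with respect to $v$, obtaining a quartic $P(u)=0$ whose coefficients, after the substitution $a_1=\frac{m}{2N}$, $a_2=\frac{l}{2N}$, $a_3=\frac{k}{2N}$ with $N=k+l+m-2$, become explicit polynomials in $k,l,m$. Each admissible root must be positive and must yield a positive partner $v$, so counting invariant Einstein metrics amounts to counting admissible positive roots of $P$. I would control this count by the discriminant $\Delta(k,l,m)$ of $P$ together with Descartes' rule and Vi\`ete's relations for the coefficients, exploiting the ordering $k\ge l\ge m$ and the identity $a_1+a_2+a_3=\tfrac12+\tfrac1N$ to keep the sign bookkeeping manageable. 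Distinct admissible roots produce pairwise non-homothetic metrics, immediately from the scale normalization.

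Finally, the two regimes should correspond to the sign of $\Delta$ and to how many real roots fall in the positive orthant. For $m>\sqrt{2k+2l-4}$ I expect to show $\Delta>0$ with all four real roots positive and giving positive $v$, hence four metrics; the barrier $m^2=2k+2l-4$ should emerge as (a factor of) the vanishing of $\Delta$, where two roots coalesce and one gets the borderline count of three. For $m<\sqrt{k+l}$ I expect exactly two admissible roots to survive, the other two becoming non-real or non-positive, the threshold $m^2=k+l$ arising as the sign change of the quantity that governs admissibility. The main obstacle is precisely this discriminant analysis: $\Delta$ is a high-degree polynomial in $k,l,m$, and the delicate point is to factor or estimate it sharply enough to see that the sign of $\Delta$ and the positivity of the roots flip exactly across the clean barriers $m^2=2k+2l-4$ and $m^2=k+l$, rather than across the unwieldy exact locus $\Delta=0$; checking that no roots are lost at infinity or on $u=0$, $v=0$ then finishes the count.
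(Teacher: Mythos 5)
Your reduction --- normalize the scale, intersect the two conics coming from $r_1=r_2=r_3$, and count admissible positive roots of a resultant quartic $P(u)$ --- is a legitimate reformulation (it is essentially the system (\ref{two_equat_sing1}) of the paper), and the bound of at most four metrics does follow from it. But the proof has a genuine gap exactly where you flag ``the main obstacle'': the entire content of the theorem is the two-regime count, and your route to it rests on the expectation that the thresholds $m^2=2k+2l-4$ and $m^2=k+l$ appear as factors of, or exact sign-change loci of, the discriminant $\Delta(k,l,m)$ of $P$. That expectation is false. The locus across which the number of admissible solutions actually changes is the degree-$12$ symmetric surface $G(k,l,m)=0$ (equivalently the surface $\Omega$ in the cube of the parameters $a_i$), and neither $m^2-2k-2l+4$ nor $m^2-k-l$ divides it. The two bounds in the theorem are only \emph{sufficient} conditions placing $(a_1,a_2,a_3)$ strictly inside the components $O_1$ and $O_3$ of the complement of $\Omega$; in the intermediate range $\sqrt{k+l}\le m\le \sqrt{2k+2l-4}$ the count can be $2$, $3$, or $4$ (for instance it is $3$ on the family $(k,l,m)=(t^2+1,t^2+1,2t)$, $t\geq 2$, which lies on $\Omega$ and satisfies $m=\sqrt{2k+2l-4}$). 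So there is no ``clean barrier'' at which $\Delta$ flips sign, and a direct Descartes/discriminant analysis of the quartic would ultimately have to establish the inequality $G(k,l,m)<0$ (respectively $>0$) for all integer triples in each regime --- a polynomial inequality of essentially the same difficulty as the theorem itself, for which your sketch supplies no method.

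The paper proceeds by a different mechanism that you would need to either import or replace. It invokes the classification of singular points of the normalized Ricci flow over the components $O_1$, $O_2$, $O_3$ (Theorem~\ref{odomains}, i.e.\ Theorem 6 of \cite{AANS2}): the number of invariant Einstein metrics is $4$, $4$, or $2$ according as $(a_1,a_2,a_3)$ lies in $O_1$, $O_2$, or $O_3$. The problem is thereby reduced to locating the point $\left(\frac{k}{2(k+l+m-2)},\frac{l}{2(k+l+m-2)},\frac{m}{2(k+l+m-2)}\right)$ relative to $\Omega$, and the thresholds are obtained geometrically rather than by discriminant estimates: on each cross-section $a_1+a_2+a_3=h$ the surface $\Omega$ cuts out a curvilinear triangle $I(h)$, and comparison with an inscribed triangle $IT(h)\subset O_1$ and a circumscribed triangle $ST(h)$ (whose complement in the cross-section lies in $O_3$) converts membership into the explicit inequalities $m>\sqrt{2k+2l-4}$ and $m<\sqrt{k+l}$, via the roots $\widehat{x}(h)$ and $\widetilde{x}(h)$ of low-degree factors of $Q$ restricted to the symmetric lines $a_i=a_j$. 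That convexity/comparison step is precisely the idea missing from your proposal: without it (or without citing the $O_j$ classification, which would make the quartic unnecessary), the final count in each regime is not established.
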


\begin{theorem}\label{emsn} Let $q$ be any number from the set $\{2,3,4\}$. Then there are infinitely many homogeneous spaces
$SO(k+l+m)/SO(k)\times SO(l)\times SO(m)$
that admit exactly $q$ invariant Einstein metrics up to a homothety.
\end{theorem}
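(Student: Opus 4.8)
The plan is to handle the three values of $q$ separately, invoking Theorem~\ref{ems} directly for $q=4$ and $q=2$ and carrying out a finer symmetry analysis at the borderline value $q=3$. Throughout I would use the standard formula for the principal Ricci curvatures of the invariant metric determined by $(x_1,x_2,x_3)$ (i.e.\ $x_i\langle\cdot,\cdot\rangle$ on $\mathfrak{p}_i$) on a generalized Wallach space,
\[
r_i=\frac{1}{2x_i}+\frac{a_i}{2}\left(\frac{x_i}{x_jx_k}-\frac{x_j}{x_ix_k}-\frac{x_k}{x_ix_j}\right),\qquad \{i,j,k\}=\{1,2,3\},
\]
so that the metric is Einstein exactly when $r_1=r_2=r_3$, with solutions counted up to the scaling $(x_1,x_2,x_3)\mapsto(\lambda x_1,\lambda x_2,\lambda x_3)$.

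For $q=4$ I would take the family $k=l=m=n$ with $n\geq 3$; then $m=n>\sqrt{2k+2l-4}=2\sqrt{n-1}$ is equivalent to $(n-2)^2>0$, so Theorem~\ref{ems} yields exactly four Einstein metrics. For $q=2$ I would fix $m=2$ and take $k=l=n$ with $n\geq 3$; then $m=2<\sqrt{k+l}=\sqrt{2n}$, and Theorem~\ref{ems} yields exactly two. Both families are infinite and satisfy $k\geq l\geq m\geq 1$, $l\geq 2$.

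The case $q=3$ is the crux, and here I would exploit the symmetry present when $k=l$, for then $a_2=a_3$ and $\mathfrak{p}_2,\mathfrak{p}_3$ are interchanged by an automorphism. Writing $a:=a_2=a_3$ and $c:=a_1$, a short computation gives the factorization
\[
2x_1x_2x_3(r_2-r_3)=(x_3-x_2)\bigl(x_1-2a(x_2+x_3)\bigr),
\]
so every Einstein metric is either \emph{symmetric} ($x_2=x_3$) or lies on the \emph{asymmetric locus} $x_1=2a(x_2+x_3)$. Setting $t=x_1/x_2$, the symmetric Einstein metrics are the positive roots of $(a_1+a_2)t^2-t+(1-2a_1)=0$, whose discriminant is proportional to $m^2-4(k-1)$; on the asymmetric locus, after normalizing $x_2+x_3=1$ and putting $v=(x_2-x_3)^2$, the remaining equation $2r_1-r_2-r_3=0$ is \emph{linear} in $v$ and so has a single root.

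I would then select the boundary family $k=l=j^2+1$, $m=2j$ (so that $m=2\sqrt{k-1}=\sqrt{2k+2l-4}$), which is infinite and admissible for $j\geq 2$. At these points the symmetric quadratic has a double root, giving exactly one symmetric Einstein metric, while the asymmetric equation yields
\[
v=\frac{(m^2-4)^2}{4m^3(m+4)}\in(0,1)\qquad(m\geq 4),
\]
a genuine root with $x_2\neq x_3$, hence exactly two asymmetric metrics (isometric under the swap but homothetically distinct, and therefore counted separately). Since these three metrics are pairwise distinct and, by the factorization above, exhaust all Einstein metrics, each such space carries exactly three. The main work — and the only real obstacle — is verifying $0<v<1$ at the boundary, so that the asymmetric solution neither degenerates to $x_3=0$ nor collapses onto the symmetric locus ($v=0$, which occurs precisely at the excluded value $j=1$); the explicit formula for $v$ settles this, and the resulting count $1+2=3$ is consistent with the $2+2=4$ and $0+2=2$ splittings dictated by Theorem~\ref{ems} on the two sides of the boundary.
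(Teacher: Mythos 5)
Your proposal is correct and follows the same skeleton as the paper: $q=2$ and $q=4$ are read off from Theorem~\ref{ems} (the paper does not even exhibit explicit families, but yours, $k=l=m=n$ and $k=l=n$, $m=2$, are valid), and $q=3$ is handled by exactly the boundary family $(k,l,m)=(j^2+1,j^2+1,2j)$, $j\ge 2$, which is the family of Remark~\ref{remim}. The one substantive difference is in how the count of three is justified: the paper simply lists the three explicit Einstein metrics and refers to \cite{Lomshakov2} for the verification that the list is exhaustive, whereas you give a self-contained argument via the factorization $2x_1x_2x_3(r_i-r_j)=(x_j-x_i)\bigl(x_k-2a(x_i+x_j)\bigr)$ (valid when $a_i=a_j=a$), reducing the problem to a quadratic in $t$ on the symmetric locus (with discriminant proportional to $m^2-4(k-1)$, vanishing precisely on your family) plus a linear equation in $v=(x_i-x_j)^2$ on the asymmetric locus; I checked that your value $v=\frac{(m^2-4)^2}{4m^3(m+4)}\in(0,1)$ agrees with the paper's explicit solutions, so the count $1+2=3$ is established and completeness comes for free from the factorization. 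This buys a proof that does not outsource the exhaustiveness claim. Two cosmetic points: with the paper's labeling $a_1=\frac{k}{2(k+l+m-2)}$, the equality $k=l$ gives $a_1=a_2$ rather than $a_2=a_3$ (your subsequent formulas are internally consistent after relabeling, and your quadratic $(a_1+a_2)t^2-t+(1-2a_1)=0$ matches the correct computation); and one should note, as the paper implicitly does, that even for $k=l$ the three modules $\mathfrak{p}_i$ remain pairwise non-isomorphic as $\Ad(H)$-modules, so all invariant metrics are indeed diagonal and the system $r_1=r_2=r_3$ captures everything.
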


The proofs of these theorems are based on a deep study of some special subsets in $(0,1/2)^3 \subset \mathbb{R}^3$
related to the normalized Ricci flow on the spaces $SO(k+l+m)/SO(k)\times SO(l)\times SO(m)$ and are obtained in the last section of this paper.
\smallskip

\section{Description of Einstein metrics on generalized Wallach spaces}
Every generalized Wallach space admits a $3$-parameter family of invariant Riemannian metrics
determined by inner products
$$(\boldsymbol{\cdot}\,,\boldsymbol{\cdot})=\left.x_1\,\langle\boldsymbol{\cdot}\,,\boldsymbol{\cdot}\rangle\right|_{\mathfrak{p}_1}+
\left.x_2\,\langle\boldsymbol{\cdot}\,,\boldsymbol{\cdot}\rangle\right|_{\mathfrak{p}_2}+
\left.x_3\,\langle\boldsymbol{\cdot}\,,\boldsymbol{\cdot}\rangle\right|_{\mathfrak{p}_3},$$
where $x_1$, $x_2$, $x_3$ are positive real numbers. Denote by $d_i$ the~dimension of~$\mathfrak{p}_i$. Let~$\big\{e^j_i\big\}$ be
an~orthonormal basis in~$\mathfrak{p}_i$ with respect to
$\langle\boldsymbol{\cdot}\,,\boldsymbol{\cdot}\rangle$ for $i=1,2,3$ and $1\le j\le d_i=\dim(\mathfrak{p}_i)$. Define
$$[ijk]=\sum_{\alpha,\beta,\gamma}
\left\langle\big[e_i^\alpha,e_j^\beta \big],e_k^\gamma \right\rangle^2,$$
where $\alpha$, $\beta$, and $\gamma$ range from~1 to~$d_i$, $d_j$, and
$d_k$ respectively. Clearly $[ijk]$ are symmetric in all three indices by the bi-invariance
of
$\langle\boldsymbol{\cdot}\,,\boldsymbol{\cdot}\rangle$.
Here we also have
$[ijk]=0$
if two indices coincide. Let $A:=[123]$.
It easy to see that $d_i\ge 2A$ for any $i=1,2,3$ (see \cite{Nikonorov2}), hence $(a_1,a_2,a_3)\in [0,1/2]^3$ where
$a_i=\frac{A}{d_i}$ for $i\in \{1,2,3\}$, see Table 1 for the value of these numbers for specific generalized Wallach spaces.
\smallskip

Note that these constants completely determine some important properties of a generalized Wallach space $G/H$, e.g. the equation
of the Ricci flow on~$G/H$, see \cite{AANS1,AANS2}.
Note also that Einstein metrics $(x_1,x_2,x_3)$ on a given generalized Wallach space are exactly the solutions
of the following polynomial system:
\begin{equation}\label{two_equat_sing1}
\begin{array}{l}
(a_2+a_3)(a_1x_2^2+a_1x_3^2-x_2x_3)+(a_2x_2+a_3x_3)x_1-(a_1a_2+a_1a_3+2a_2a_3)x_1^2=0,
\\
(a_1+a_3)(a_2x_1^2+a_2x_3^2-x_1x_3)+(a_1x_1+a_3x_3)x_2-(a_1a_2+2a_1a_3+a_2a_3)x_2^2=0.\\
\end{array}
\end{equation}
Indeed, the Ricci curvature of the Riemannian metric corresponding to the inner product $(\boldsymbol{\cdot}\,,\boldsymbol{\cdot})$ is
$\Ric=r_1 (\boldsymbol{\cdot}\,,\boldsymbol{\cdot})|_{\mathfrak{p}_1}+
r_2 (\boldsymbol{\cdot}\,,\boldsymbol{\cdot})|_{\mathfrak{p}_2}+r_3 (\boldsymbol{\cdot}\,,\boldsymbol{\cdot})|_{\mathfrak{p}_3}$, where
the principal Ricci curvatures $r_i$ satisfy the equation
$$
r_i=\frac{1}{2x_i}
+\frac{a_i}{2}
\biggl(
\frac{x_i}{x_j x_k}-
\frac{x_k}{x_i x_j}-
\frac{x_j}{x_i x_k}
\biggr),
$$
where
$i,j,k \in \{1,2,3\}$
and
$i \neq j \neq k \neq i$ (see Lemma 2 in \cite{Nikonorov2}).
The metric under consideration is Einstein if and only if $r_1=r_2=r_3$, that is equivalent to (\ref{two_equat_sing1}).
More details could be found in \cite{Lomshakov2} or  in \cite[Section 2]{AANS1}.

\begin{center}
\begin{figure}[t]
\centering\scalebox{1}[1]{
\includegraphics[angle=-90,totalheight=2.5in]{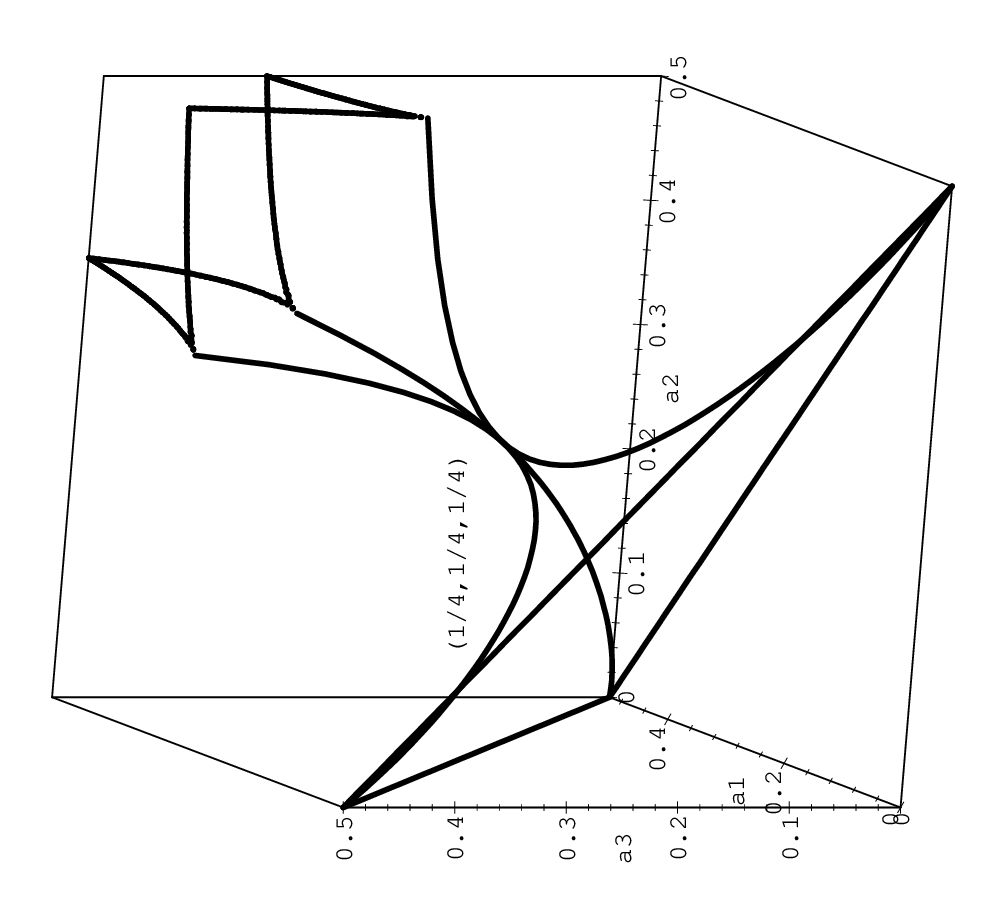}
\includegraphics[angle=-90,totalheight=2.5in]{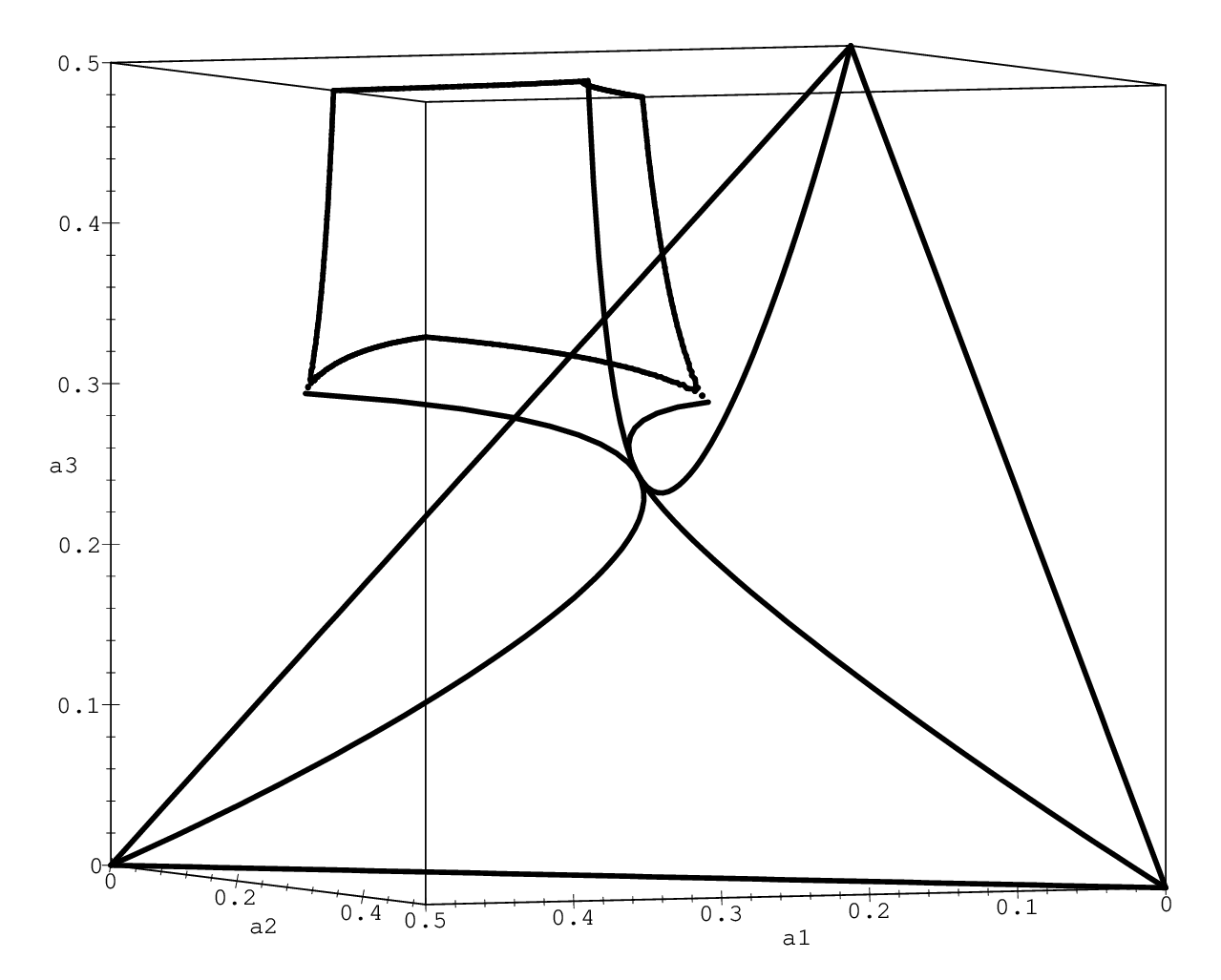}}
\caption{The surface $\Omega \cap [0,1/2]^3$.}
\label{singsur}
\end{figure}
\end{center}

Let $\Omega$ denote the algebraic surface in $\mathbb{R}^3$ defined by the equation $Q(a_1,a_2,a_3)=0$, where
\begin{eqnarray}\label{singval2}\notag
Q(a_1,a_2,a_3)\,=\,
(2s_1+4s_3-1)(64s_1^5-64s_1^4+8s_1^3+12s_1^2-6s_1+1\\\notag
+240s_3s_1^2-240s_3s_1-1536s_3^2s_1-4096s_3^3+60s_3+768s_3^2)\\
-8s_1(2s_1+4s_3-1)(2s_1-32s_3-1)(10s_1+32s_3-5)s_2\\\notag
-16s_1^2(13-52s_1+640s_3s_1+1024s_3^2-320s_3+52s_1^2)s_2^2\\\notag
+64(2s_1-1)(2s_1-32s_3-1)s_2^3+2048s_1(2s_1-1)s_2^4
\end{eqnarray}
and $s_1 = a_1+a_2+a_3$, $s_2 = a_1a_2+a_1a_3+a_2a_3$, $s_3 = a_1a_2a_3$.
Obviously, $Q(a_1,a_2,a_3)$ is a symmetric polynomial in $a_1,a_2,a_3$ of degree 12.
\smallskip

We recall some important properties of $\Omega$, see \cite{AANS1} for details.
The points $(0,0,1/2)$, $(0,1/2,0)$, and $(1/2,0,0)$ are all vertices of the cube $[0,1/2]^3$, which are also points of $\Omega$.
For $a_1=1/2$ and $a_2,a_3 \in (0,1/2]$, points of $\Omega$ form
a curve homeomorphic to the interval $[0,1]$ with endpoints $(1/2,1/2,\sqrt{2}/4\approx 0.3535533905)$ and $(1/2,\sqrt{2}/4\approx 0.3535533905,1/2)$
and with the singular point (a cusp) at the point $a_3=a_2=(\sqrt{5}-1)/4\approx 0.3090169942$.
The same is also valid under the permutation $a_1\to a_2 \to a_3\to a_1$.
\smallskip

The plain $s_1=a_1+a_2+a_3=1/2$ intersects
the set $\Omega \cap [0,1/2]^3$ exactly for points in the boundary of the triangle with the vertices
$(0,0,1/2)$, $(0,1/2,0)$, and $(1/2,0,0)$. For all other points in $\Omega \cap (0,1/2]^3$ we have the inequality
$s_1=a_1+a_2+a_3>1/2$.
\smallskip

Note that $(1/4,1/4,1/4)$ is the only point in $\Omega \cap [0,1/2]^3$
satisfying $s_1=a_1+a_2+a_3=3/4$. It turns out that the point $(1/4,1/4,1/4)$ is a singular point of degree $3$ of the algebraic surface
$\Omega$. This point is an elliptic umbilic (in the sense of Darboux) on the surface $\Omega$.
\smallskip

Now, we discuss a part of the surface $\Omega$ in the cube $(0,1/2)^3$.
Obviously, $\Omega$ is invariant under the permutation $a_1\to a_2\to a_3\to a_1$. It should be noted that the set
$(0,1/2)^3\cap \Omega$ is connected.
There are three curves (so-called ``edges'') of {\it singular points} on $\Omega$ (i.e. points where $\nabla Q =0$):
one of them has a parametric representation $a_1=-\frac{1}{2}\frac{16t^3-4t+1}{8t^2-1}, a_2=a_3=t$, and the others
are defined by permutations of $a_i$. The point $(1/4,1/4,1/4)$ is a common point of these three curves. The part of $\Omega$ in $(0,1/2)^3$
consists of three (pairwise isometric) ``bubbles'' spanned on every pair of ``edges''.
Note also that in~\cite{Ba}, the author found an explicit parameterization of the surface $\Omega$.
Another important fact is that the set $(0,1/2)^3\setminus \Omega$ has exactly three connected components.
More precisely, we have

\begin{theorem}[Theorem 1 in \cite{Abiev}]\label{odomains}
The following assertions hold with respect to the standard topology of $\mathbb{R}^3$:
\begin{enumerate}
  \item The set $(0,1/2)^3\cap \Omega$ is connected.
  \item The set $(0,1/2)^3\setminus \Omega$ consists of three conneted components.
\end{enumerate}
\end{theorem}

Another proof of these theorem could be found in \cite{BaBru}.
According to \cite{AANS1}, we denote by $O_1$, $O_2$, and $O_3$ the components containing the points
$(1/6,1/6,1/6)$, $(7/15,7/15,7/15)$, and $(1/6, 1/4, 1/3)$ respectively. Note that $Q(a_1,a_2,a_3)<0$ for $(a_1,a_2,a_3) \in O_1 \cup O_2$ and
$Q(a_1,a_2,a_3)>0$ for $(a_1,a_2,a_3) \in O_3$.
\smallskip

It is shown in \cite{AANS1}, that the normalized Ricci flow for a generalized Wallach space with $(a_1,a_2,a_3)\in (0,1/2)^3\setminus \Omega$
has no degenerate singular point, as a planar dynamical system.
The surface $\Omega$ was very important for the statement of
Theorem 6 in \cite{AANS1} (see also Theorem 7 in \cite{AANS2}), which
provides a general result about the type of the non-degenerate
singular points of the normalized Ricci flow for a generalized Wallach space
with given $a_1$, $a_2$, and $a_3$.
We recall it, taking in mind that singular points are exactly invariant Einstein metrics of fixed volume.

\begin{theorem}[Theorem 6 in \cite{AANS1}]\label{odomains}
Assume that $G/H$ is a generalized Wallach space such that $(a_1,a_2,a_3)\in O_i$.
\begin{enumerate}
  \item There are four singular points for $i=1$ including one unstable node and three saddles;
  \item There are four singular points for $i=2$ including one stable node and three saddles;
  \item There are two singular points for $i=3$ which are saddles.
\end{enumerate}
\end{theorem}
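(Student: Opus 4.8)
The plan is to realize the singular points as the equilibria of an explicit planar dynamical system and to propagate a single computation across each connected component $O_j$. The principal Ricci curvatures of the metric $(x_1,x_2,x_3)$ are diagonal in the decomposition $\mathfrak{p}=\mathfrak{p}_1\oplus\mathfrak{p}_2\oplus\mathfrak{p}_3$, namely
$$
r_i=\frac{1}{2x_i}+\frac{a_i}{2}\left(\frac{x_i}{x_jx_k}-\frac{x_j}{x_ix_k}-\frac{x_k}{x_ix_j}\right),\qquad\{i,j,k\}=\{1,2,3\},
$$
so that the volume-normalized Ricci flow reads $\dot x_i=-2x_i\bigl(r_i-S/n\bigr)$ with $S=\sum_j d_jr_j$ and $n=\sum_j d_j$, and preserves the surface $\prod_j x_j^{d_j}=1$. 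Restricting to that surface turns the flow into a planar autonomous system whose rest points are precisely the metrics with $r_1=r_2=r_3$, that is, the solutions of the Einstein system \eqref{two_equat_sing1}, which involves only $(a_1,a_2,a_3)$. A crucial structural feature is that this normalized flow is the gradient flow of the scalar curvature functional on invariant metrics of fixed volume; hence the linearization at any rest point is symmetric, all its eigenvalues are real, and no foci can occur---every non-degenerate rest point is a node or a saddle.

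Next I would run a continuation argument. A hyperbolic rest point persists under a continuous change of parameters and retains its index ($+1$ for a node, $-1$ for a saddle) together with its stability; by the gradient structure these can change only when an eigenvalue of the symmetric linearization crosses zero, i.e. at a degenerate rest point. By the result of \cite{AANS1} recalled above, no degenerate singular point occurs for $(a_1,a_2,a_3)\in(0,1/2)^3\setminus\Omega$. Hence on this set the number of rest points, their node/saddle type, and their stabilities are locally constant, and since each $O_j$ is connected they are constant throughout $O_j$. It therefore suffices to determine the configuration at one representative of each component, for which we take $(1/6,1/6,1/6)\in O_1$, $(7/15,7/15,7/15)\in O_2$, and $(1/6,1/4,1/3)\in O_3$.

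The third step is the explicit computation at these three points. On the symmetric diagonal $a_1=a_2=a_3=a$ the system \eqref{two_equat_sing1} is invariant under the full permutation group; the standard metric $x_1=x_2=x_3$ is always a solution, and the remaining solutions form a single orbit of type ``two coordinates equal, one different'', giving four rest points in all. Linearizing, the three asymmetric points have negative Jacobian determinant and are saddles, while the diagonal point has positive determinant; its trace is positive at $a=1/6$ and negative at $a=7/15$, so it is an unstable node in $O_1$ and a stable node in $O_2$. At the asymmetric representative $(1/6,1/4,1/3)$ one solves \eqref{two_equat_sing1} directly, finds exactly two positive solutions, and checks that both have negative Jacobian determinant, hence are saddles.

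Finally I would record a consistency check that also accounts for the transitions. Assigning index $+1$ to a node and $-1$ to a saddle, each component has total index $-2$ ($1-3$ in $O_1$ and $O_2$, and $-1-1$ in $O_3$); by Morse theory for the gradient flow this sum is governed by the behavior of the scalar curvature functional on the boundary of the moduli space and is the same across components, which forces a crossing of $\Omega$ to be a fold (saddle--node) bifurcation in which a node and a saddle of net index $0$ coalesce and vanish, dropping the count from four to two. I expect the real difficulty to lie not in this qualitative scheme, which is immediate once the non-degeneracy of \cite{AANS1} is granted, but in the explicit linear algebra at the representatives---fixing the sign of the trace at the symmetric node on each side of $\Omega$---and in confirming that, as the parameters move inside a fixed $O_j$, no rest point escapes to or enters from the boundary of the positive octant, so that the interior count is genuinely preserved.
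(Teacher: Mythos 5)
First, a point of order: the paper contains no proof of Theorem~\ref{odomains} at all --- it is imported verbatim as Theorem~6 of \cite{AANS2}, with the non-degeneracy off $\Omega$ coming from \cite{AANS1} --- so there is no in-paper argument to measure yours against, only the strategy of the cited sources. Judged on its own terms, your skeleton is the right one and essentially mirrors that strategy: reduce the volume-normalized Ricci flow to a planar system whose rest points are the solutions of (\ref{two_equat_sing1}); use the gradient structure of the scalar curvature functional on the unit-volume slice to see that the linearization is symmetrizable, so the eigenvalues are real and only nodes and saddles can occur; invoke non-degeneracy off $\Omega$ and the connectedness of each $O_j$ to propagate a computation done at the representatives $(1/6,1/6,1/6)$, $(7/15,7/15,7/15)$, $(1/6,1/4,1/3)$. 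The observation that the $S_3$-symmetry at the symmetric representatives forces the three non-normal metrics to be of the same type is a genuine economy.

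The genuine gap is the one you relegate to your closing sentence, and it is not a technicality. Hyperbolicity plus connectedness gives local constancy of the number and type of rest points only as long as no rest point (i) collides with another (excluded off $\Omega$ by \cite{AANS1}), (ii) fails to be isolated, or (iii) crosses the boundary of the phase space as the parameters move inside $O_j$. Without excluding (ii) and (iii), the counts ``four'' and ``two'' are established only near the chosen representatives. Item (ii) and the a priori bound of four follow from B\'ezout, since the two equations of (\ref{two_equat_sing1}) are conics in $\mathbb{P}^2$ (one must still rule out a common component for $(a_1,a_2,a_3)\in(0,1/2)^3$). Item (iii) is the harder one: a projective solution can leave the open positive triangle only through a coordinate line $\{x_i=0\}$, and setting $x_i=0$ in the two conics yields two binary forms in $(x_j:x_k)$ whose resultant is a concrete polynomial $R(a_1,a_2,a_3)$; you must show either that $R$ does not vanish on $(0,1/2)^3$, or that its zero locus does not produce an actual transversal crossing inside any $O_j$. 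Nothing in your write-up addresses this, and your Morse-theoretic ``consistency check'' cannot substitute for it, since the index sum $-2$ is equally consistent with a saddle crossing the boundary. Finally, the explicit linear algebra at the three representatives --- the signs of the Jacobian determinants and of the trace at the normal metric, which carry the entire content of the node/saddle classification --- is asserted rather than computed. With the boundary analysis supplied and those computations carried out, your outline becomes a complete proof along the same lines as \cite{AANS1,AANS2}.
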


Now we describe the location of points $(a_1,a_2,a_3)\in \mathbb{R}^3$ determined by generalized Wallach spaces in Theorem \ref{classgws}.
Table 2 is to describe the region for every generalized Wallach space in Table 1.

{\small
\renewcommand{\arraystretch}{1.4}
\begin{table}[h]
{\bf Table 2.} The region for $G/H$ in Table 1.
\begin{center}
\begin{tabular}{|c|c|c|c|c|c|c|c|c|c|}
\hline
G/H & $(a_1,a_2,a_3)$ & G/H & $(a_1,a_2,a_3)$  & G/H & $(a_1,a_2,a_3)$  &  G/H & $(a_1,a_2,a_3)$  &  G/H & $(a_1,a_2,a_3)$ \\
\hline\hline
1&$O_1 \cup O_3 \cup \Omega$& 2 & $O_1$ & 3 & $O_1$ & 4 & $O_1$  &  5 & $O_3$ \\ \hline
6 & $O_3$ & 7 & $O_1$ & 8 & $O_3$ & 9 & $O_1$& 10 & $O_3$  \\ \hline
11 & $O_2$  &  12 & $O_3$ & 13 & $O_2$& 14 & $O_3$ & 15 & $O_1$ \\ \hline
\end{tabular}
\end{center}
\end{table}
\renewcommand{\arraystretch}{2}
}

In this table, the i-th generalized Wallach space means the one corresponding to the i-th pair in Table 1. Recall that every such space
determines the point $(a_1,a_2,a_3)$ and the points obtained with permutations of $a_1$, $a_2$, and $a_3$.
\smallskip

The corresponding result for the spaces $SO(k+l+m)/SO(k)\times SO(l)\times SO(m)$, where at least two of the numbers $k$, $l$, and $m$
are greater than $1$, was obtained in \cite{Nikonorov4}. If at least two of the numbers $k$, $l$, and $m$
are equal to $1$, then the point $(a_1,a_2,a_3)$ belongs to the boundary of the cube $(0,1/2)^3$.
We will discuss it in details in the next section.
It should be noted that these spaces are excepted from all other types of generalized Wallach spaces,
because the corresponding triples $(a_1,a_2,a_3)$ do not belong to only one region $O_i$.
\smallskip

For $SU(k+l+m)/S\big(U(k)\times U(l) \times U(m)\big)$, $k \geq l\geq m \geq 1$, we have
$$
a_1=\frac{k}{2(k+l+m)}, \quad a_2=\frac{l}{2(k+l+m)}, \quad a_3=\frac{m}{2(k+l+m)},
$$
and $a_1+a_2+a_3=1/2$.
It is clear that $(a_1,a_2,a_3)$ belongs to $O_1$. Moreover, the closure of the set of all such points coincides with the triangle in $\mathbb{R}^3$ with the
vertices $(0,0,1/2)$, $(0,1/2,0)$, and $(1/2,0,0)$. The last assertion easily follows from considering the barycentric coordinates in this triangle.
For these spaces we have simple explicit expressions for Einstein metrics, see \cite{Ar,Kim,Lomshakov2}.
\smallskip

For $Sp(k+l+m)/Sp(k)\times Sp(l) \times Sp(m)$, $k \geq l\geq m \geq 1$, we have
$$
a_1=\frac{k}{2(k+l+m+1)}, \quad a_2=\frac{l}{2(k+l+m+1)}, \quad a_3=\frac{m}{2(k+l+m+1)},
$$
and $a_1+a_2+a_3<1/2$. Hence $(a_1,a_2,a_3)$ also belongs to $O_1$.
Note that the original proof  in~\cite{Lomshakov2} of the fact that the spaces $Sp(k+l+m)/Sp(k)\times Sp(l) \times Sp(m)$
admits exactly four Einstein invariant metric (up to a homothety)
is very complicated and is based on the calculation of the Sturm's sequence of polynomials.
\smallskip

For the others, we may use one more simple observation: For $a_1=a_2=a_3=:a$, the point
$(a_1,a_2,a_3)$ is in $O_1$ (respectively, $O_2$), if $a<1/4$ (respectively, $a>1/4$).
Direct computations show that $(a_1,a_2,a_3)\in O_1$ for the spaces corresponding to {\bf lines 4, 7, 9},
and {\bf 15} of Table 1, $(a_1,a_2,a_3)\in O_3$ for the spaces corresponding to {\bf lines 5, 6, 8, 10, 12}, and {\bf 14} of Table 1,
and $(a_1,a_2,a_3)\in O_2$ for the spaces corresponding to the lines {\bf 11} and {\bf 13} of Table 1.
\smallskip

Theorem \ref{odomains} and Table 2 give us the explicit number of Einstein invariant metrics (up to a homothety)
on every generalized Wallach space $G/H$ with simple $G$ except $SO(k+l+m)/SO(k)\times SO(l)\times SO(m)$.

\begin{remark}
Note that the above examples satisfying $(a_1,a_2,a_3)\in O_2$ are interesting,
since they give an affirmative answer to the question of Prof. Christoph B\"{o}hm on the existence
of specific examples of generalized Wallach spaces with this property.
\end{remark}

If $G/H$ is a symmetric space which is a product of three irreducible symmetric spaces, then we get $A=0$ and $(a_1,a_2,a_3)=(0,0,0)$.
Note also that the equality $a_1=a_2=a_3=1/4$ holds for every space $(F \times F\times F \times F)/\diag(F)$,
as well as for the space $SO(6)/SO(2)^3$ (see details in \cite{Nikonorov4}).
Recall that the point $(1/4,1/4,1/4)$ is an elliptic umbilic on the surface $\Omega$.

\begin{remark}
The isotropy representation for the space $F^4/\diag(F)$, that is a Ledger--Obata space {\rm(}see \cite{CNN, LedgerObata, Nikonorov3}{\rm)},
can be decomposed into the sum of $3$ pairwise isomorphic $\Ad(\diag(F))$-modules which may not coincides with $\mathfrak{p}_i$,
$i=1,2,3$, as in Theorem \ref{classgws}, see \cite{Nikonorov3}.
The structure of invariant metrics on $F^k/\diag(F)$ was described in \cite{Nikonorov3}. Note that for $k=2$ we get irreducible symmetric spaces.
For $k=3$ and $k=4$, the classification of invariant Einstein metrics on the spaces $F^k/\diag(F)$
was obtained in \cite{Nikonorov3} and \cite{CNN} respectively,
but for $k \geq 5$ this problem is open.
\end{remark}

\section{The spaces $SO(k+l+m)/SO(k)\times SO(l)\times SO(m)$}

For the space $SO(k+l+m)/SO(k)\times SO(l) \times SO(m)$, $k \geq l\geq m \geq 1$, we have
$$
a_1=\frac{k}{2(k+l+m-2)}, \quad a_2=\frac{l}{2(k+l+m-2)}, \quad a_3=\frac{m}{2(k+l+m-2)}.
$$
If $l=m=1$, then we get $a_1=1/2$ and $a_2=a_3=\frac{1}{2k}$.
Hence, $(a_1,a_2,a_3) \not\in (0,1/2)^3$, but $(a_1,a_2,a_3)\in (0,1/2]^3$.
It is easy to check with using of (\ref{two_equat_sing1}) that such space has exactly one Einstein invariant metrics up to a homothety:
$(x_1,x_2,x_3)=(k+1,k+1,2k)$.
\smallskip

In what follows, we assume that $l\geq 2$. Therefore, $k\geq l \geq 2$ and $k+l+m \geq 5$.
It is proved in \cite[Section 6]{Nikonorov4} that $(a_1,a_2,a_3)\in O_1\cup O_3 \cup \Omega$ in this case.
Moreover, $(a_1,a_2,a_3)$ belongs to $O_1$, $O_3$, or $\Omega$, if
$Q(a_1,a_2,a_3)<0$, $Q(a_1,a_2,a_3)>0$, or $Q(a_1,a_2,a_3)=0$ respectively.
In the following, we will give some details.
\smallskip

Note that $a_1+a_2+a_3 = \frac{k+l+m}{2(k+l+m-2)}$. Since the function $x \mapsto \frac{x}{2(x-2)}=:g(x)$ decreases for $x>2$,
we know that $a_1+a_2+a_3 \leq 3/4=g(6)$ holds for any $k,l,m$ satisfying $k+m+l \geq 6$.
\smallskip

For $k+m+l \leq 5$ we should check only the space $SO(5)/SO(2)\times SO(2)\times SO(1)$ with $a_1=a_2=1/3$ and $a_3=1/6$.
It is easy to see that the point $(1/3,1/3,1/6)$ is in $O_3$, because the inequalities $a_i \geq 1/4$, $i=1,2,3$ hold for any points in $O_2$.
\smallskip

If  $k+m+l =6$, then we have two spaces $SO(6)/SO(2)^3$ and $SO(6)/SO(3)\times SO(2)\times SO(1)$ with $a_1+a_2+a_3 =3/4$.
Recall that the plane $a_1+a_2+a_3 =3/4$ intersects the surface $\Omega \cap (0,1/2)^3$ exactly in the point $(1/4,1/4,1/4)$
corresponding to the first space $SO(6)/SO(2)^3$. The space $SO(6)/SO(2)^3$ admits only standard Einstein metric,
which is highly degenerate as a singular point of the normalized Ricci flow, see details in \cite{AANS1} and \cite{AANS2}.
For the space $SO(6)/SO(3)\times SO(2)\times SO(1)$ we get $(a_1,a_2,a_3)=(3/8,1/4,1/8)\in O_3$.
\smallskip

If $k+m+l \geq 7$, we know $a_1+a_2+a_3 \leq 7/10=g(7)<3/4$. It follows that $(a_1,a_2,a_3)\not\in O_2$. For small values of $k\geq l\geq m$, we have the following Table.
{\small
\renewcommand{\arraystretch}{1.4}
\begin{table}[h]
{\bf Table 3.} The region for small $(k,l,m)$.
\begin{center}
\begin{tabular}{|c|c|c|c|c|c|c|c|c|c|}
\hline
$(k,l,m)$ & Region & $(k,l,m)$ & Region &$(k,l,m)$ & Region &$(k,l,m)$ & Region &$(k,l,m)$ & Region  \\
\hline\hline
(4,2,1) & $O_3$ & (3,3,1) & $O_3$ & (3,2,2) & $O_3$  &  (5,2,1) & $O_3$ & (4,3,1) & $O_3$ \\ \hline
(4,2,2) & $O_3$ & (3,3,2) & $O_3$ & (6,2,1) & $O_3$  & (5,3,1) & $O_3$ & (4,4,1) & $O_3$  \\ \hline
(5,2,2) & $O_3$ & (4,3,2) & $O_3$ &  (3,3,3) & $O_1$ & (7,2,1) & $O_3$ &  (6,3,1) & $O_3$ \\ \hline
(5,4,1) & $O_3$ & (6,2,2) & $O_3$ & (5,3,2) & $O_3$ &  (4,4,2) & $O_3$ & (4,3,3) & $O_3$ \\ \hline
(8,2,1) & $O_3$ & (7,3,1) & $O_3$ &  (6,4,1) & $O_3$ & (5,5,1) & $O_3$ & (7,2,2) & $O_3$ \\ \hline
(6,3,2) & $O_3$ &  (5,4,2) & $O_3$ & (5,3,3) & $O_3$ & (4,4,3) & $O_3$ & (9,2,1) & $O_3$ \\ \hline
(8,3,1) & $O_3$ & (7,4,1) & $O_3$ & (6,5,1) & $O_3$ & (8,2,2) & $O_3$ &  (7,3,2) & $O_3$ \\ \hline
(6,4,2) & $O_3$ & (5,5,2) & $O_3$ & (6,3,3) & $O_3$ &  (5,4,3) & $O_3$ & (4,4,4) & $O_1$ \\ \hline
(5,4,4) & $O_1$ & (6,4,4) & $O_1$ &  (5,5,4) & $O_3$ & (6,5,5) & $O_1$ & (7,6,5) & $O_1$ \\ \hline
\end{tabular}
\end{center}
\end{table}
\renewcommand{\arraystretch}{2}
}

Define a polynomial $G(k,l,m)$ in $k,l,m$ by
\begin{equation}\label{equag1}
\frac{G(k,l,m)}{2^{12}(k+l+m-2)^{12}}=Q\left(\frac{k}{2(k+l+m-2)},\frac{l}{2(k+l+m-2)},\frac{m}{2(k+l+m-2)}\right).
\end{equation}
It is clear that $G(k,l,m)$ is a symmetric (with respect to $k$, $l$ and $m$) polynomial of degree 12. Let $t_1 = k+l+m$, $t_2 = kl+km+lm$, and $t_3 = klm$.
Then we have \begin{equation}\label{equag2}
G(k,l,m)=H(t_1,t_2,t_3),
\end{equation}
where
\begin{eqnarray}\label{equag3}\notag
H(t_1,t_2,t_3)\,=\,
\big(16(t_1-2)^2+4t_3\big)\Big(1024t_1^5(t_1-2)^4-2048t_1^4(t_1-2)^5+512t_1^3(t_1-2)^6\\ \notag
+1536t_1^2(t_1-2)^7-1536t_1(t_1-2)^8+512(t_1-2)^9+3840t_3t_1^2(t_1-2)^4-4096t_3^3\\ \notag
-7680t_3t_1(t_1-2)^5-6144t_3^2t_1(t_1-2)^2+3840t_3(t_1-2)^6+6144t_3^2(t_1-2)^3\Big)\\
-8t_1\big(16(t_1-2)^2+4t_3\big)\big(16(t_1-2)^2-32t_3\big)\big(80(t_1-2)^2+32t_3\big)t_2\\ \notag
-16t_1^2\Big(832(t_1-2)^6-1664t_1(t_1-2)^5+2560t_3t_1(t_1-2)^2\\ \notag
+1024t_3^2-2560t_3(t_1-2)^3+832t_1^2(t_1-2)^4\Big)t_2^2\\ \notag
+1024(t_1-2)^2\big(16(t_1-2)^2-32t_3\big)t_2^3+32768t_1t_2^4(t_1-2)^2.
\end{eqnarray}
We see that the above Diophantine equation $G(k,l,m)=0$ is very complicated. Clearly,
\smallskip

\begin{lem}
The point $(a_1,a_2,a_3)=\left(\frac{k}{2(k+l+m-2)}, \frac{l}{2(k+l+m-2)}, \frac{m}{2(k+l+m-2)}\right)$ is in
$O_1$, $O_3$, or $\Omega$, if
$G(k,l,m)<0$, $G(k,l,m)>0$, or $G(k,l,m)=0$ respectively.
\end{lem}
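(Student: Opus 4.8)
The plan is to reduce the assertion about $G(k,l,m)$ entirely to the trichotomy already recorded for $Q$ in the paragraph preceding (\ref{equag1}), namely that $(a_1,a_2,a_3)$ lies in $O_1$, $O_3$, or $\Omega$ according as $Q(a_1,a_2,a_3)$ is negative, positive, or zero. The only genuinely new ingredient is the passage from the rational quantity $Q(a_1,a_2,a_3)$ to the integer polynomial $G$, and this is purely a matter of the sign of the scaling factor in the defining identity.

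First I would note that the hypothesis $l\geq 2$ forces $k\geq l\geq 2$ and $m\geq 1$, so that $k+l+m\geq 5$ and hence $k+l+m-2\geq 3>0$. Rewriting (\ref{equag1}) as
\[
G(k,l,m)=2^{12}(k+l+m-2)^{12}\,Q(a_1,a_2,a_3),
\]
the prefactor $2^{12}(k+l+m-2)^{12}$ is strictly positive, so $G(k,l,m)$ and $Q(a_1,a_2,a_3)$ have the same sign and vanish simultaneously. Next I would check that the point $(a_1,a_2,a_3)$ genuinely lies in the open cube $(0,1/2)^3$, where the decomposition into $O_1,O_2,O_3$ and $\Omega$ is defined: each coordinate is positive since $k,l,m\geq 1$, while $a_1=\frac{k}{2(k+l+m-2)}<\frac{1}{2}$ because $l+m\geq 3>2$, with the analogous strict bounds for $a_2$ and $a_3$. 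Combining this with the result of \cite{Nikonorov4} quoted above, that $(a_1,a_2,a_3)\in O_1\cup O_3\cup\Omega$ in the present range, the point never falls into $O_2$, and the sign of $Q$ then fixes the component unambiguously: $Q<0$ is impossible on $O_3$ (where $Q>0$) and on $\Omega$ (where $Q=0$), hence forces membership in $O_1$; $Q>0$ forces membership in $O_3$; and $Q=0$ is exactly the defining equation of $\Omega$. Passing through the sign equivalence of the first step replaces $Q$ by $G$ and yields the claim.

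I do not expect a real obstacle here: the substantive facts — that the relevant triples avoid $O_2$, and that $Q$ has the stated sign on each of $O_1,O_3,\Omega$ — are imported from \cite{Nikonorov4} and \cite{AANS1}. The only point demanding a line of care is the strictness of the inequalities placing $(a_1,a_2,a_3)$ in the open cube, so that the classification applies and the degenerate triple $(1/4,1/4,1/4)$ attached to $SO(6)/SO(2)^3$, which sits on $\Omega$, is handled consistently with $G=0$. Everything else is the algebraic bookkeeping already carried out in establishing (\ref{equag2})–(\ref{equag3}), which I would not reproduce.
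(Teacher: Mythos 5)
Your argument is correct and is exactly what the paper intends: the lemma is stated with ``Clearly'' because it follows immediately from the positivity of the prefactor $2^{12}(k+l+m-2)^{12}$ in (\ref{equag1}) together with the facts, quoted just before, that $(a_1,a_2,a_3)\in O_1\cup O_3\cup\Omega$ and that $Q$ is negative on $O_1$, positive on $O_3$, and zero on $\Omega$. Your added care about the point lying in the open cube $(0,1/2)^3$ and about excluding $O_2$ is a reasonable (if implicit in the paper) piece of bookkeeping, not a departure from the paper's route.
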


It follows that

\begin{pred}\label{numberem}
If $G(k,l,m)<0$ {\rm(}respectively, $>0${\rm)}, then $SO(k+l+m)/SO(k)\times SO(l) \times SO(m)$
admits four {\rm(}respectively, two{\rm)} invariant Einstein metrics.
\end{pred}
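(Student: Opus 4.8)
The plan is to obtain this proposition as an essentially immediate consequence of the preceding Lemma together with Theorem~\ref{odomains}; the only genuine content is to assemble these two facts and to invoke the standard dictionary between singular points of the normalized Ricci flow and homothety classes of invariant Einstein metrics.

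First I would use the preceding Lemma to translate the hypothesis into a statement about the location of the point $(a_1,a_2,a_3)=\left(\frac{k}{2(k+l+m-2)},\frac{l}{2(k+l+m-2)},\frac{m}{2(k+l+m-2)}\right)$: if $G(k,l,m)<0$ then $(a_1,a_2,a_3)\in O_1$, while if $G(k,l,m)>0$ then $(a_1,a_2,a_3)\in O_3$. Under the standing assumption $l\geq 2$ this point lies in the open cube $(0,1/2)^3$, and the condition $G(k,l,m)\neq 0$ places it in $(0,1/2)^3\setminus\Omega$. By the result of \cite{AANS1} quoted earlier, the normalized Ricci flow for such a space then has no degenerate singular point, which is precisely what guarantees that the number of singular points provided by Theorem~\ref{odomains} is exact and that each is an isolated Einstein metric.

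Next I would recall the fact used throughout the paper that the singular points of the normalized Ricci flow on $G/H$ are exactly the invariant Einstein metrics of a fixed volume, so that they are in bijection with the homothety classes of invariant Einstein metrics. With this identification, Theorem~\ref{odomains} completes the argument: when $(a_1,a_2,a_3)\in O_1$ there are four singular points (one unstable node and three saddles), hence four Einstein metrics up to homothety; when $(a_1,a_2,a_3)\in O_3$ there are two singular points (both saddles), hence two Einstein metrics up to homothety.

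Since all the hard analysis has already been carried out --- the connectedness of $(0,1/2)^3\setminus\Omega$ and the description of its three components $O_1,O_2,O_3$, the type classification of Theorem~\ref{odomains}, and the repackaging of the sign of $Q$ into the Diophantine polynomial $G(k,l,m)$ via (\ref{equag1})--(\ref{equag3}) --- there is no real obstacle remaining at this step. The one point deserving care is simply to confirm that the hypotheses of Theorem~\ref{odomains} genuinely apply, namely that $(a_1,a_2,a_3)$ lands in $O_1$ or $O_3$ rather than on $\Omega$; this is exactly what the strict inequality $G(k,l,m)\neq 0$ secures.
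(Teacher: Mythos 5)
Your proposal is correct and follows exactly the paper's (implicit) argument: the paper derives this proposition directly from the preceding Lemma combined with Theorem~\ref{odomains}, using the identification of singular points of the normalized Ricci flow with invariant Einstein metrics of fixed volume, which is precisely what you have spelled out.
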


\section{A further study on $SO(k+l+m)/SO(k)\times SO(l)\times SO(m)$}

In this section, we give a further study on $SO(k+l+m)/SO(k)\times SO(l)\times SO(m)$ satisfying $k\geq l \geq m$ and $l\geq 2$.
In particular we will prove Theorem \ref{ems} and Theorem \ref{emsn}. For this goal we get some more detailed information
on some special sets in $(0,1/2)^3$ related to the normalized Ricci flows on the spaces $SO(k+l+m)/SO(k)\times SO(l)\times SO(m)$.
In particular we prove

\begin{theorem}\label{emso}
Let $A$ be one of the set $\Omega$, $O_1$, and $O_3$.
Then there are infinitely many triples $(k,l,m)$ such that
$(a_1,a_2,a_3)=\left(\frac{k}{2(k+l+m-2)},\frac{l}{2(k+l+m-2)},\frac{m}{2(k+l+m-2)}\right)\in A$.
\end{theorem}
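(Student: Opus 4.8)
The plan is to reduce the entire statement to the sign of the polynomial $G$ from (\ref{equag3}). By the Lemma preceding Proposition~\ref{numberem}, the point $(a_1,a_2,a_3)$ attached to $(k,l,m)$ lies in $O_1$, $O_3$, or $\Omega$ precisely when $G(k,l,m)<0$, $>0$, or $=0$, respectively. Thus it suffices to exhibit, for each of the three targets, an explicit infinite family of admissible triples with $k\ge l\ge m\ge1$ and $l\ge2$ realizing the corresponding sign. I would hunt for these families inside the symmetric slice $k=l$, where the Einstein system (\ref{two_equat_sing1}) simplifies dramatically.

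The key computation is the following. On the slice $a_1=a_2$ I look for the \emph{symmetric} Einstein metrics, i.e. solutions of (\ref{two_equat_sing1}) with $x_1=x_2=:y$ and $x_3=:x$. A direct substitution shows that both equations of (\ref{two_equat_sing1}) collapse to the single homogeneous quadratic
\begin{equation*}
(a_1+a_3)\,x^2-x\,y+(1-2a_3)\,y^2=0,
\end{equation*}
whose discriminant in $x/y$ equals $\delta:=1-4(a_1+a_3)(1-2a_3)$. Substituting $a_1=\frac{k}{2(2k+m-2)}$ and $a_3=\frac{m}{2(2k+m-2)}$ (with $l=k$) yields
\begin{equation*}
\delta=\frac{m^2-4(k-1)}{(2k+m-2)^2}.
\end{equation*}
When $\delta=0$ the two symmetric Einstein metrics coincide, so (\ref{two_equat_sing1}) acquires a multiple solution and the corresponding point lies on the discriminant surface $\Omega$. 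Hence $m^2=4(k-1)$ forces $G(k,k,m)=0$. Taking $k=l=n^2+1$, $m=2n$ produces the infinite family $(n^2+1,n^2+1,2n)$, $n\ge1$, which satisfies $k=l\ge m$ (since $(n-1)^2\ge0$) and $l\ge2$, and all of whose members lie on $\Omega$. This settles the case $A=\Omega$.

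For $A=O_1$ and $A=O_3$ I would keep the slice $k=l$ and move $m$ to either side of the threshold $m^2=4(k-1)$. The natural candidates are $(n,n,n)$ with $n\ge3$, for which $m^2-4(k-1)=(n-2)^2>0$, and $(n,n,1)$ with $n\ge2$, for which $m^2-4(k-1)=5-4n<0$; both are admissible and infinite. For the diagonal family the elementary criterion recalled before Proposition~\ref{numberem} (namely $a_1=a_2=a_3<1/4\Rightarrow O_1$) already places $(n,n,n)$ in $O_1$. The remaining and, I expect, hardest point is to show that the sign of $\delta$ genuinely detects the dichotomy $O_1$ versus $O_3$, that is, $\operatorname{sign}G(k,k,m)=-\operatorname{sign}\bigl(m^2-4(k-1)\bigr)$ along these families.

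I would settle this last point in one of two ways. Since $m^2-4k+4$ is irreducible and $G(k,k,m)$ vanishes on the infinitely many points with $m^2=4(k-1)$, it divides $G(k,k,m)$, so $G(k,k,m)=(m^2-4k+4)\,C(k,m)$ for a polynomial $C$; it then remains to prove $C<0$ on the relevant range, after which both signs follow at once. Alternatively, and more robustly for a write-up, I would substitute the two families directly into (\ref{equag3}) and verify the one-variable polynomial inequalities $G(n,n,1)>0$ for $n\ge2$ and $G(n,n,n)<0$ for $n\ge3$, by inspecting the leading coefficient together with finitely many small values (the cases $n=2,3,4,5$ of the $O_3$ family already appear in Table~3). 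The genuinely delicate part is controlling the sign of the degree-$10$ cofactor $C$, equivalently ruling out spurious sign changes of these one-variable polynomials coming from the cusp component of $\Omega$; this component meets the slice $k=l$ only when $(k-2)^2+4$ is a perfect square, hence for finitely many $k$, so it cannot interfere with the tails of the families. Once this is done, the three families above complete the proof.
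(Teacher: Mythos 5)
Your reduction to the sign of $G$ and your three candidate families essentially coincide with the paper's, but the three verifications have different statuses. The $\Omega$ case is correct and is in fact a more conceptual argument than the paper's: the paper simply checks $G(t^2+1,t^2+1,2t)=0$ by direct computation, whereas you derive the family by noting that on the slice $k=l$ both equations of (\ref{two_equat_sing1}) with $x_1=x_2=y$ collapse to $a_1\bigl((a_1+a_3)x^2-xy+(1-2a_3)y^2\bigr)=0$ (this is right), so that vanishing of the discriminant $\delta$ produces a double root, hence a degenerate singular point, hence a point of $\Omega$ by the result of \cite{AANS1} quoted in Section~2; your computation $\delta=\bigl(m^2-4(k-1)\bigr)/(2k+m-2)^2$ checks out. (To make the ``merging'' step airtight, observe that at the double root $\partial F_i/\partial x_3=0$ for both equations, which together with the Euler relation for the homogeneous system forces the reduced Jacobian to be singular.) The $O_1$ case via $(n,n,n)$, $n\ge3$, and the criterion $a_1=a_2=a_3<1/4\Rightarrow O_1$ is complete; the paper uses Proposition~\ref{prop1n} instead, but both work.

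The genuine gap is the $O_3$ family. The triples $(n,n,1)$ are a correct choice, but you never prove $G(n,n,1)>0$; you offer two strategies and yourself flag the sign control as ``the genuinely delicate part''. The first strategy cannot succeed as stated: the relation $\operatorname{sign}G(k,k,m)=-\operatorname{sign}\bigl(m^2-4(k-1)\bigr)$ is false on the slice $k=l$ in general, because by (\ref{ih1}) the restriction of $Q$ to $a_1=a_2=x$ factors as a linear factor times the quadratic $16x^2-(24h-4)x+8h^2-4h+1$ (which is exactly your $\delta$) times the cube of $f(x)=16x^3-16hx^2+2h-1$; the cofactor $C$ therefore changes sign across the locus $f=0$ (the vertex ``edge'' of $\Omega$), so it is not of one sign on the admissible range. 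Your two families happen to lie on the same side of that locus, but that is precisely what needs proof, and ``leading coefficient plus finitely many small values'' is not a proof for a degree-$12$ polynomial without a root bound. The paper closes exactly this gap with Proposition~\ref{prop4}: since $f$ decreases on $[0,2h/3]$, the condition $a_3<\widetilde{x}(h_0)$ is equivalent to $f(a_3)>0$, i.e.\ to $m<\bigl(1+\sqrt{k+l}\bigr)\frac{k+l-2}{k+l-1}$, which holds trivially for $m=1$, and Lemma~\ref{lemma0}(2) then gives $O_3$. Replacing your last paragraph by an appeal to Corollary~\ref{cor3}(1) or Corollary~\ref{cor4} (here $1<\sqrt{2n}$) completes the proof.
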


\begin{center}
\begin{figure}[t]
\centering\scalebox{1}[1]{
\includegraphics[angle=0,totalheight=2in]{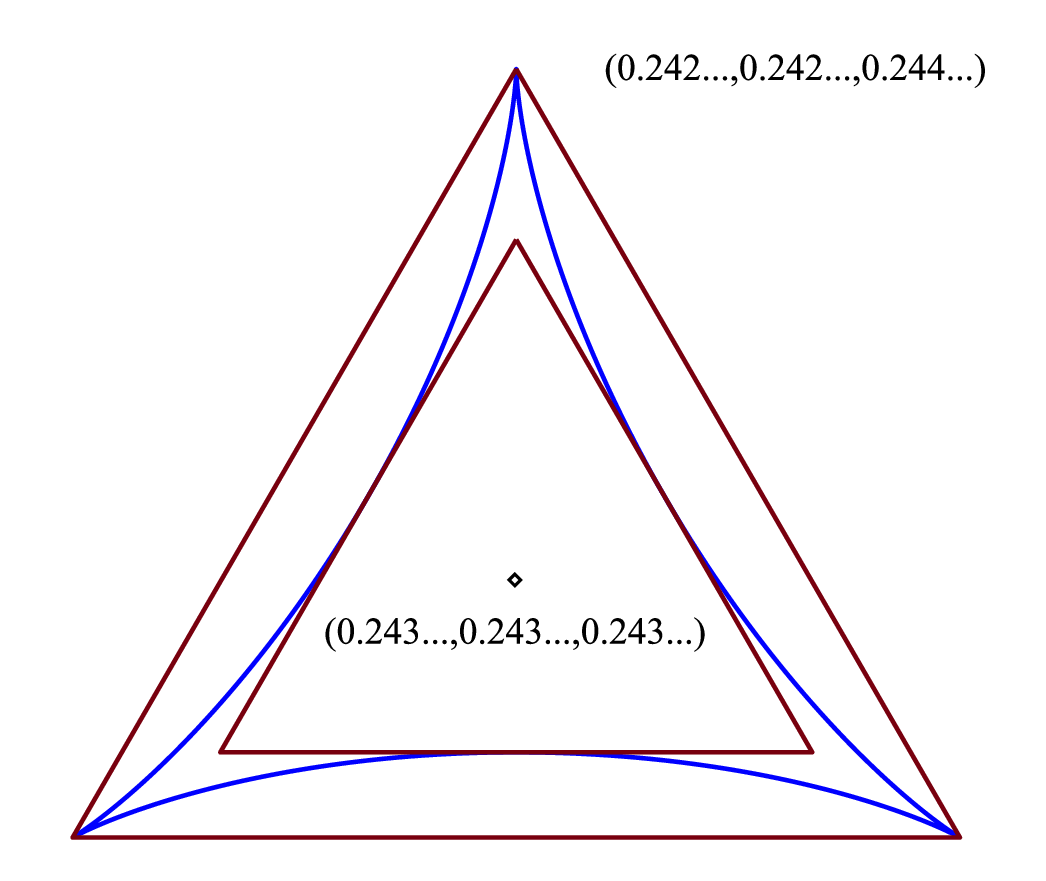}\qquad
\includegraphics[angle=0,totalheight=2in]{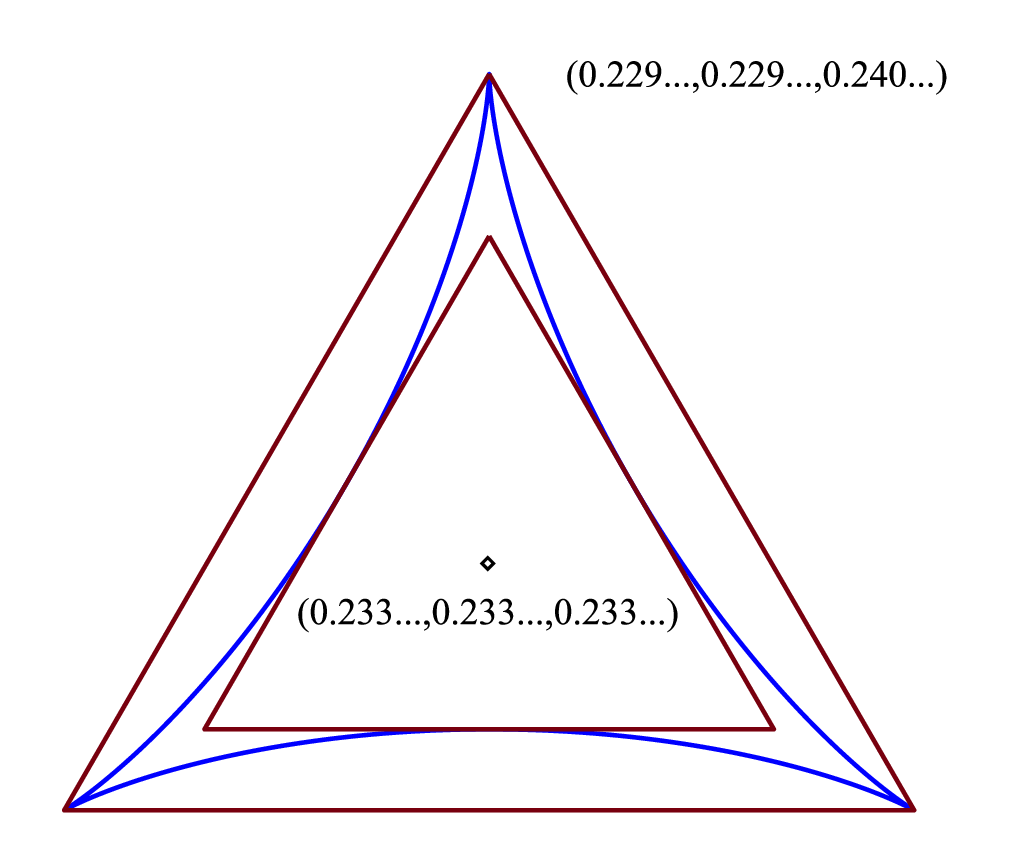}
}
\caption{The triangles $I(h)$, $IT(h)$, $ST(h)$ for $h=\frac{73}{100}$ and $h=\frac{7}{10}$.}
\label{curv1}
\end{figure}
\end{center}

In this section, we will assume that $k+m+l \geq 7$
(the cases $k+m+l < 7$ are discussed in the previous section).
\medskip

Let us fix some $h\in (1/2,3/4)$.
If $a_1+a_2+a_3=h$ and there exist $i,j \in \{1,2,3\}$ with $a_i=a_j$, then the equation $Q(a_1,a_2,a_3)=0$ is
\begin{equation}\label{ih1}
(4x-1-2h)(16x^2-(24h-4)x+8h^2-4h+1)(16x^3-16hx^2+2h-1)^3=0,
\end{equation}
where $a_i=a_j=x$, $a_k=h-2x$, $\{i,j,k\}=\{1,2,3\}$.
This equation has only two (distinct) roots in the interval $[0,h/2]$. One root is
\begin{equation}\label{ih2}
\widehat{x}=\widehat{x}(h)=\frac{6h-1-\sqrt{4h^2+4h-3}}{8}
\end{equation}
and the other one (that has multiplicity 3) is $\widetilde{x}=\widetilde{x}(h)$. Here, $\widetilde{x}$ is a root of the equation
\begin{equation}\label{ih3}
f(x):=16x^3-16hx^2+2h-1=0,
\end{equation}
that is in $(0,h/3)$. It should be noted that
the discriminant of the polynomial $f(x)$ is $D:=256(2h-1)(64h^3-54h+27)$ and $D>0$ for $h\in (1/2,3/4)$.
Hence the equation (\ref{ih3}) has three real roots. The first one is in $(-\infty,0)$, the second one is in $(0,h/3)$, and the third one is in $(h/2,\infty)$.
\smallskip

Note that $f(x)$ decreases for $x\in [0,2h/3]$.
It is easy to prove that $f(\widehat{x})<0=f(\widetilde{x})$, hence $\widetilde{x}<\widehat{x}$.
\smallskip

We need the following property of the surface $\Omega$ and the function $Q(a_1,a_2,a_3)$ (see (\ref{singval2})).

\begin{lem}\label{extrval1}
Suppose that a point $(a_1,a_2,a_3)\in (0,1/2)^3$ is such that $Q(a_1,a_2,a_3)=0$ and \linebreak
$\frac{\partial Q}{\partial a_i}(a_1,a_2,a_3)= \frac{\partial Q}{\partial a_j}(a_1,a_2,a_3)$, $i\neq j$, $i,j\in \{1,2,3\}$. Then
$(a_1-a_2)(a_1-a_3)(a_2-a_3)=0$.
\end{lem}

Note that this lemma imply (in particular) that all singular points $(a_1,a_2,a_3)$ of the surface
$\Omega=\{(a_1,a_2,a_3)\in \mathbb{R}^3\,|\, Q(a_1,a_2,a_3)=0\}$
(i.~e. points where $\nabla Q  =0$)
in $(0,1/2)^3$ are such that $(a_1-a_2)(a_1-a_3)(a_2-a_3)=0$. The proof of Lemma \ref{extrval1} is quite technical and could be found in Appendix.
\smallskip

For $h\in [1/2,3/4]$, we denote by $I(h)$ the intersection of the surface $\Omega \cap [0,1/2]^3$ with the plane $\pi(h):=\{(a_1,a_2,a_3)\,|\,a_1+a_2+a_3=h\}$.
Clear that $I(3/4)$ is the point $(1/4,1/4,1,4)$ and $I(1/2)$ is the boundary of the triangle with the vertices
$(0,0,1/2)$, $(0,1/2,0)$ and $(1/2,0,0)$.
 $I(h)$.
For $h\in [1/2,3/4)$, $I(h)$ contains exactly one point on every singular ``edge'' of $\Omega$
($I(3/4)$ contains one common point of all ``edges''). We call these special three points
(namely, $(\widetilde{x},\widetilde{x},h-2\widetilde{x})$, $(\widetilde{x},h-2\widetilde{x},\widetilde{x})$, and
$(h-2\widetilde{x},\widetilde{x},\widetilde{x})$)
as vertices of $I(h)$, that is a ``curvilinear triangle''.
Obviously, $I(h)$ is invariant under the group of symmetry of a usual triangle with the same vertices.
Clear that $I(h)$ is compact and it is contained in $(0,1/2)^3$ for all $h\in (1/2,3/4]$.
We are not going to emphasize the geometry of $I(h)$ in details, because all we need for our purposes is Lemma \ref{triangleincl} below.
\smallskip

Denote by $ST(h)$ the usual triangle (in the plane $\pi(h)$) with the same vertices as $I(h)$ has.
Now, let $IT(h)$ be a maximal usual triangle in $I(h)$ (in the plane $\pi(h)$, in particular) with sides parallel to sides of the triangle $ST(h)$
(it has the same symmetry group as $I(h)$ has).
\smallskip

Clearly, $ST(1/2)=IT(1/2)=I(1/2)$, $ST(3/4)=IT(3/4)=I(3/4)=\{(1/4,1/4,1/4)\}$, and it could be demonstrated that
$IT(h)\subsetneq I(h)\subsetneq ST(h)$ for $h\in(1/2,3/4)$
(see Figure~\ref{singsur}). In order to show the evolutions of these types of triangles, we reproduced the pictures in the planes $\pi(h)$
with four values of the parameter $h$ (see Figure~\ref{curv1} and Figure~\ref{curv2}).
\smallskip

\begin{lem}\label{triangleincl}
For any $h\in (1/2,3/4)$, $(\widehat{x},\widehat{x},h-2\widehat{x}),
(\widehat{x},h-2\widehat{x},\widehat{x}),(h-2\widehat{x},\widehat{x},\widehat{x}) \in IT(h)\cap I(h)$ and $I(h)\subset ST(h)$.
\end{lem}

\begin{proof} Recall that $\Omega=\{(a_1,a_2,a_3)\in \mathbb{R}^3\,|\, Q(a_1,a_2,a_3)=0\}$. The intersection of $\Omega \cap [0,1/2]^3$ with the plane
$\pi(h)=\{(a_1,a_2,a_3)\,|\,a_1+a_2+a_3=h\}$ is compact and is contained in $(0,1/2)^3$.
Clear that $(h/3,h/3,h/3)\not\in \Omega$, therefore,  $IT(h)$ is non-degenerate.
Clear that $IT(h)\cap I(h)$ is non-empty. If a point $(a_1,a_2,a_3)\in IT(h)\cap I(h)$, then either $(a_1,a_2,a_3)$ is a vertex of $IT(h)$
(and, consequently, of $I(h))$
or the gradient $\nabla Q (a_1,a_2,a_3)$ is orthogonal to one of the vectors $(1,-1,0)$, $(1,0,-1)$, $(0,1,-1)$
(because the sides of $IT(h)$ are parallel to these vectors). In the second case two components of $\nabla Q (a_1,a_2,a_3)$ are coincide and
we have $(a_1-a_2)(a_1-a_3)(a_2-a_3)=0$ by Lemma~\ref{extrval1}. The same we have in the first case,
since the vertices are of singular points on $\Omega$ (i.e. points where $\nabla Q =0$).
By the above discussion on  $\widehat{x}(h)$ and $\widetilde{x}(h)$ (see (\ref{ih2}) and (\ref{ih3})), we see that
(up to a permutation of coordinates) either $(a_1,a_2,a_3)=(\widehat{x},\widehat{x},h-2\widehat{x})$ or
$(a_1,a_2,a_3)=(\widetilde{x},\widetilde{x},h-2\widetilde{x})$.
Clear that the triangle with the vertices $(\widehat{x},\widehat{x},h-2\widehat{x})$,
$(\widehat{x},h-2\widehat{x},\widehat{x})$,  and $(h-2\widehat{x},\widehat{x},\widehat{x})$ are in the interior of the triangle $ST(h)$.
Therefore, $(a_1,a_2,a_3)=(\widehat{x},\widehat{x},h-2\widehat{x})$, and we prove the the first assertion of the lemma.

For the second assertion, suppose that $I(h)\not\subset ST(h)$. In the plane $\pi(h)$, consider a minimal triangle $MT(h)$
with sides parallel to the sides of $ST(h)$ and $I(h)\subset MT(h)$. Clearly $ST(h)$ is in the interior of $MT(h)$.
There is a point $(a_1,a_2,a_3)\in I(h)$ in the interior of some side of $MT(h)$.
Hence, two components of $\nabla Q (a_1,a_2,a_3)$ are coincide and
we have $(a_1-a_2)(a_1-a_3)(a_2-a_3)=0$ by Lemma~\ref{extrval1}.
By the above discussion on  $\widehat{x}(h)$ and $\widetilde{x}(h)$, we see that
(up to a permutation of coordinates) either $(a_1,a_2,a_3)=(\widehat{x},\widehat{x},h-2\widehat{x})$ or
$(a_1,a_2,a_3)=(\widetilde{x},\widetilde{x},h-2\widetilde{x})$. But both these points are in $ST(h)$.
This contradiction proves the second assertion of the lemma.
\end{proof}

\smallskip

Lemma \ref{triangleincl} and simple arguments about convex hulls of triples of points imply

\begin{lem}\label{lemma0}
Let $(a_1,a_2,a_3)\in (0,1/2)^3$ satisfying $a_1+a_2+a_3=h\in (1/2,3/4)$.
\begin{enumerate}
\item If $a_i> h-2\widehat{x}(h)$ for any $i=1,2,3$, then $(a_1,a_2,a_3)\in IT(h) \subset O_1$.
\item If there exists some $i\in \{1,2,3\}$ such that $a_i < \widetilde{x}(h)$,
then $(a_1,a_2,a_3)\in (0,1/2)^3 \cap \big(\pi(h)\setminus ST(h)\big) \subset O_3$.
\end{enumerate}
\end{lem}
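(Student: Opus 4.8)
The plan is to exploit the fact that $I(h)$, the cross-section $\Omega\cap\pi(h)$, is a curvilinear triangle invariant under the full symmetry group of the ordinary triangle $ST(h)$, sandwiched between the inscribed triangle $IT(h)$ and the circumscribed triangle $ST(h)$. Since $O_1$ is the component containing the ``central'' region near $(1/4,1/4,1/4)$ and $O_3$ is the component adjacent to the boundary faces, the geometric content of the lemma is simply that $IT(h)$ lies entirely on the $O_1$-side of $\Omega$ while the complement $\pi(h)\setminus ST(h)$ (intersected with $(0,1/2)^3$) lies entirely on the $O_3$-side. Both claims reduce to locating a given point relative to the three arcs of $I(h)$, and the key is to compare the candidate point against the inscribed/circumscribed triangles using the two special roots $\widehat{x}$ and $\widetilde{x}$ from equations \eqref{ih2} and \eqref{ih3}.

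\textbf{Step 1.} First I would make precise how $IT(h)$ and $ST(h)$ are cut out by the coordinates. By the preceding discussion, the vertices of $IT(h)$ are the three permutations of the point $(\widehat{x},\widehat{x},h-2\widehat{x})$, where each such vertex has its two equal (larger) coordinates on an edge-midpoint of the curvilinear triangle; the vertices of $ST(h)$ are the permutations of $(\widetilde{x},\widetilde{x},h-2\widetilde{x})$, which sit on the singular ``edges'' of $\Omega$. Since $\widetilde{x}<\widehat{x}$ (already established via $f(\widehat{x})<0=f(\widetilde{x})$ and monotonicity of $f$ on $[0,2h/3]$), the three vertices of $ST(h)$ are the extreme corners and those of $IT(h)$ are the ``inner'' edge-touching points. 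An equilateral-type triangle in the plane $\pi(h)$ can be described by three linear inequalities on the $a_i$; here I would verify that the vertex of $ST(h)$ opposite to the $i$-th coordinate direction has smallest coordinate $\widetilde{x}$, so that the condition $a_i<\widetilde{x}(h)$ for some $i$ places the point strictly outside $ST(h)$. Dually, the inscribed triangle $IT(h)$ is characterized by all three coordinates being at least as large as the smallest coordinate of a vertex of $IT(h)$, namely $h-2\widehat{x}$, giving precisely the condition $a_i>h-2\widehat{x}(h)$ for all $i$ in part (1).

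\textbf{Step 2.} For part (1), having the three inequalities $a_i>h-2\widehat{x}(h)$ define the open inscribed triangle $IT(h)$, I would invoke the established inclusion $IT(h)\subset I(h)$ together with the fact that $IT(h)$ is a convex set contained in the closed ``$O_1$-region'' bounded by $I(h)$; since the interior of $IT(h)$ does not meet the curvilinear boundary $I(h)=\Omega\cap\pi(h)$, it lies in a single component of $(0,1/2)^3\setminus\Omega$, and testing one interior point (e.g.\ the centroid $(1/4,1/4,1/4)$-nearby point, known to be in $O_1$) identifies that component as $O_1$. For part (2), the complement $\pi(h)\setminus ST(h)$, intersected with $(0,1/2)^3$, is the union of three unbounded convex wedges each defined by a single inequality $a_i<\widetilde{x}(h)$; again each wedge is connected, avoids $\Omega$ (since $\Omega\cap\pi(h)=I(h)\subset ST(h)$), and so lies in one component; testing a point with one coordinate near $0$ shows it is $O_3$.

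\textbf{The main obstacle} I anticipate is justifying rigorously that the inscribed triangle meets $I(h)$ only in the three edge-touching points and that the circumscribed triangle contains $I(h)$ with the arcs bulging \emph{outward} toward the vertices rather than inward. The phrase ``simple arguments about convex hulls of triples of points'' suggests the intended proof is purely combinatorial-geometric: $I(h)$ is the image of a curve whose three symmetric arcs are each convex (bulging outward), so $IT(h)$ is its largest inscribed symmetric triangle and $ST(h)$ its vertex-spanning triangle, and the containments $IT(h)\subseteq I(h)\subseteq ST(h)$ are immediate from convexity of each arc. The delicate point is the convexity/outward-bulging of the arcs, which must be read off from the structure of $I(h)$ (equivalently from the sign behaviour of $Q$ restricted to $\pi(h)$ away from the $a_i=a_j$ slices); once that convexity is granted, both parts follow by the elementary observation that a point is in the convex inscribed triangle iff it clears all three chords, and outside the circumscribed triangle iff it violates one of the three vertex-supporting inequalities $a_i\ge\widetilde{x}(h)$.
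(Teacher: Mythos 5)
Your overall strategy is the one the paper intends: the paper offers no written proof beyond the phrase ``simple arguments about convex hulls of triples of points,'' and the intended argument is exactly your combination of (a) linear-inequality descriptions of $ST(h)$ and $IT(h)$ in the plane $\pi(h)$, and (b) a connectedness-plus-test-point argument to identify which component of $(0,1/2)^3\setminus\Omega$ each region lies in. Your Step~2 is sound. However, Step~1 contains a concrete geometric misidentification that makes your derivation of the key inequality description internally inconsistent. The three permutations of $(\widehat{x},\widehat{x},h-2\widehat{x})$ are \emph{not} the vertices of $IT(h)$; they are the midpoints of its edges (the points where the straight edges of $IT(h)$ touch the arcs of $I(h)$). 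Indeed, the convex hull of those three points is the triangle $\{a\in\pi(h): a_i\le\widehat{x}\ \forall i\}$ (since $\widehat{x}>h/3>h-2\widehat{x}$, each $a_i$ attains its maximum $\widehat{x}$ on an edge of that hull and its minimum $h-2\widehat{x}$ at a single vertex), and this is the \emph{medial} triangle of $T:=\{a\in\pi(h): a_i\ge h-2\widehat{x}\ \forall i\}$, hence strictly smaller than $T$. So your claim that ``the inscribed triangle is characterized by all coordinates being at least the smallest coordinate of a vertex'' does not follow from your identification of the vertices: if the $W_i$ were the vertices, the resulting triangle would not be $T$, and part (1) would fail for points of $T$ outside the medial triangle. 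The correct statement is $IT(h)=T$, with vertices at the permutations of $(4\widehat{x}-h,\,h-2\widehat{x},\,h-2\widehat{x})$ and edge-midpoints at the $W_i$; with that fix your argument goes through.

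A second, smaller error: you assert the three arcs of $I(h)$ bulge \emph{outward} and deduce $I(h)\subseteq ST(h)$ from that. The bulge is in fact \emph{inward}, toward the centroid: the arc opposite the vertex $(\widetilde{x},\widetilde{x},h-2\widetilde{x})$ has endpoints with $a_3=\widetilde{x}$ and midpoint with $a_3=h-2\widehat{x}>\widetilde{x}$ (e.g.\ for $h=0.7$ one has $\widetilde{x}\approx 0.2308$ and $h-2\widehat{x}\approx 0.2317$). Outward bulging would give $ST(h)\subseteq I(h)$ and destroy part (2) entirely, so this is not a harmless slip of terminology; fortunately the containment $I(h)\subsetneq ST(h)$ you actually need is asserted in the paper and can simply be cited. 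Finally, you correctly flag the genuinely delicate point, which the paper also leaves unproved: that the line $a_k=h-2\widehat{x}(h)$ meets the opposite arc only at its midpoint (equivalently, that $T$ is inscribed in $I(h)$), and that the arcs stay inside $ST(h)$. Neither you nor the paper supplies this verification; it is the real content hidden behind ``simple arguments.''
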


\begin{center}
\begin{figure}[t]
\centering\scalebox{1}[1]{
\includegraphics[angle=0,totalheight=2in]{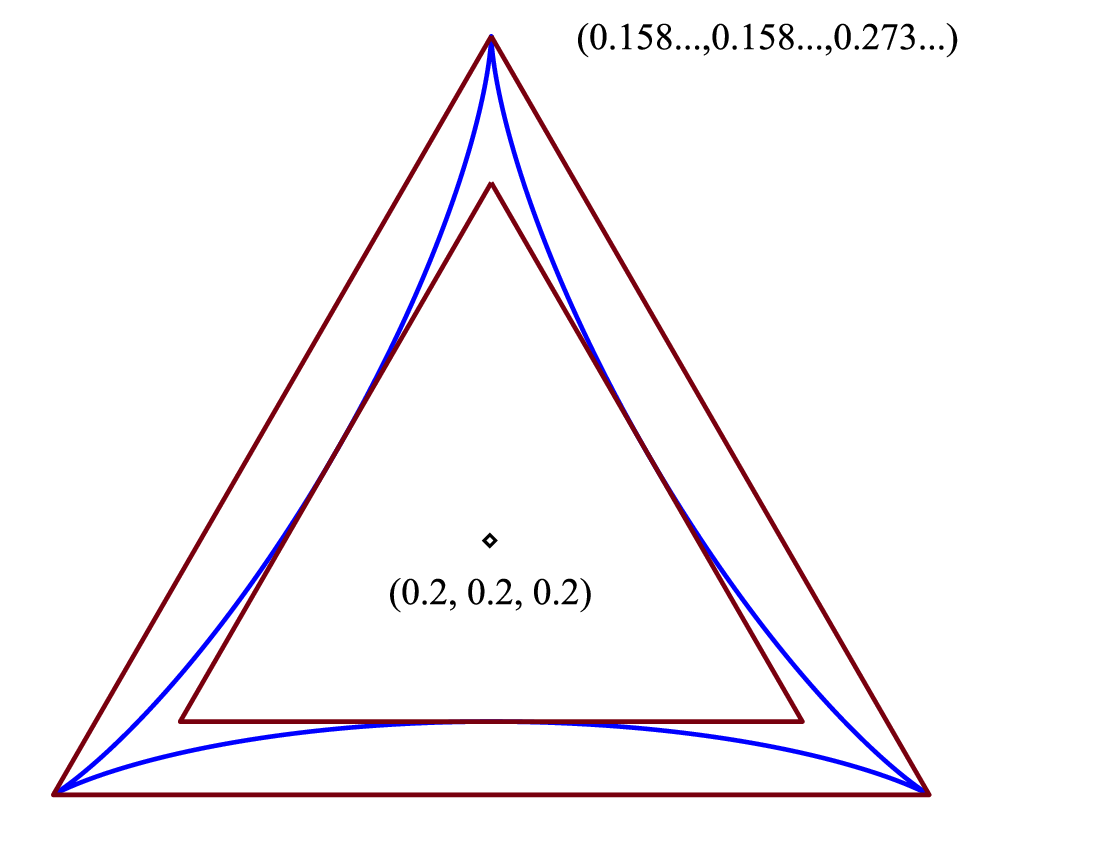}\qquad \quad
\includegraphics[angle=0,totalheight=2in]{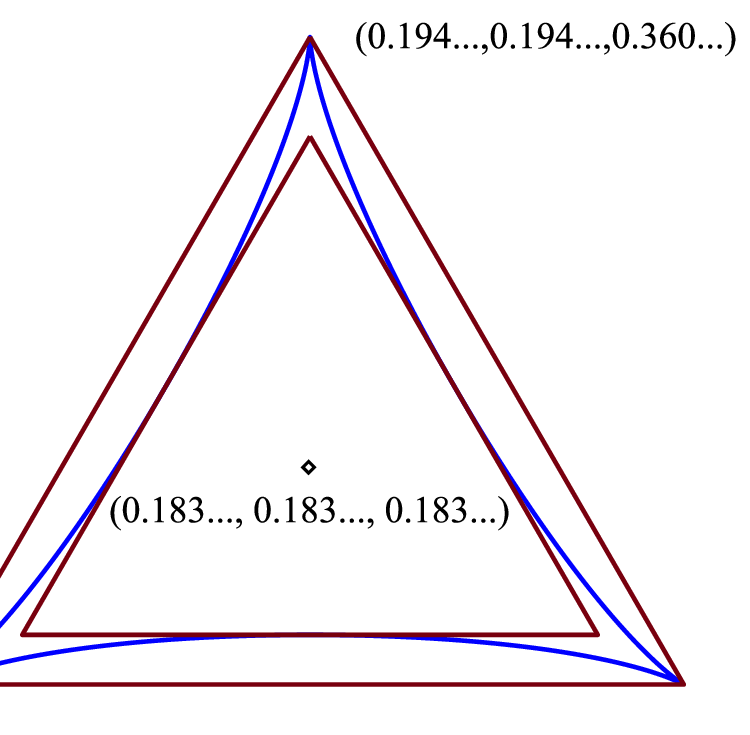}}
\caption{The triangles $I(h)$, $IT(h)$, $ST(h)$ for $h=\frac{3}{50}$ and $h=\frac{11}{20}$.}
\label{curv2}
\end{figure}
\end{center}

In what follows, we use the quantity
\begin{equation}\label{h0}
h_0:=\frac{k+l+m}{2(k+l+m-2)}.
\end{equation}
Clear, $h_0=a_1+a_2+a_3$ for $(a_1,a_2,a_3)=\left(\frac{k}{2(k+l+m-2)},\frac{l}{2(k+l+m-2)},\frac{m}{2(k+l+m-2)} \right)$.
Recall that $1/2<h_0 \leq 7/10 <3/4$ for $k+m+l \geq 7$.

\begin{pred}\label{prop1}
If $k\geq l\geq m$, $l\geq 2$, $k+m+l \geq 7$, and
\begin{equation}\label{ih5}
\frac{k+l}{4(k+l+m-2)}< \widehat{x}(h_0),
\end{equation}
then $(a_1,a_2,a_3) \in O_1$.
\end{pred}

\begin{proof}
Note that the inequality (\ref{ih5}) is equivalent to $a_3=\frac{m}{2(k+l+m-2)}> h_0 -2\widehat{x}(h_0)$. It implies that
$(a_1,a_2,a_3) \in IT(h) \subset O_1$ by $a_1\geq a_2 \geq a_3$ and Lemma \ref{lemma0}.
\end{proof}
\smallskip

Let $h^{\ast}=\frac{3\sqrt{2}-2}{4}$. Clearly the function $h \mapsto \widehat{x}(h)$ decreases on the interval $[1/2,h^{\ast}]$
and increases on the interval $[h^{\ast},3/4]$. Then we have
$\widehat{x}(h)\geq \frac{\sqrt{2}-1}{2}$ since $\widehat{x}(h^{\ast})=\frac{\sqrt{2}-1}{2}\approx 0,2071067810<\widehat{x}(1/2)=\widehat{x}(3/4)=1/4$.
By Proposition~\ref{prop1}, we have the following corollary.

\begin{cor}\label{cor1}
If $k\geq l \geq m$, $l\geq 2$, $k+m+l \geq 7$, and
\begin{equation}\label{ih6}
\frac{k+l}{4(k+l+m-2)}<\frac{\sqrt{2}-1}{2},
\end{equation}
then $(a_1,a_2,a_3) \in O_1$.
\end{cor}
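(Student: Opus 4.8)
The plan is to obtain Corollary \ref{cor1} as an immediate consequence of Proposition \ref{prop1} together with the monotonicity of $\widehat{x}$ recorded just above the statement, via a one-line transitivity argument. Proposition \ref{prop1} already delivers the conclusion $(a_1,a_2,a_3)\in O_1$ under hypothesis \ref{ih5}, that is $\frac{k+l}{4(k+l+m-2)}<\widehat{x}(h_0)$ with $h_0=\frac{k+l+m}{2(k+l+m-2)}$. Since the right-hand side of the corollary's hypothesis \ref{ih6} is the \emph{constant} $\frac{\sqrt2-1}{2}$, the whole task reduces to checking the single inequality $\frac{\sqrt2-1}{2}\le\widehat{x}(h_0)$; chaining it with \ref{ih6} produces exactly \ref{ih5}.

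To justify that inequality I would argue that $\frac{\sqrt2-1}{2}$ is a global lower bound for $\widehat{x}$ on its natural domain. The function $\widehat{x}(h)=\frac{6h-1-\sqrt{4h^2+4h-3}}{8}$ is real and smooth for $h\ge 1/2$, where $4h^2+4h-3=(2h-1)(2h+3)\ge 0$. Differentiating, the equation $\widehat{x}'(h)=0$ becomes $3\sqrt{4h^2+4h-3}=2h+1$; its only admissible root is $h^{\ast}=\frac{3\sqrt2-2}{4}$, at which $\widehat{x}(h^{\ast})=\frac{\sqrt2-1}{2}$. As $\widehat{x}(1/2)=1/4$ and $\widehat{x}(h)\to\infty$ when $h\to\infty$, the point $h^{\ast}$ is the unique interior minimum, so $\widehat{x}(h)\ge\frac{\sqrt2-1}{2}$ for all $h\ge 1/2$. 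This is precisely the estimate stated in the paragraph preceding the corollary, so in the write-up I would simply cite it rather than redo the calculus. Granting this bound, the corollary follows at once: because $h_0=\frac{k+l+m}{2(k+l+m-2)}>1/2$ for every admissible triple, we get $\frac{k+l}{4(k+l+m-2)}<\frac{\sqrt2-1}{2}\le\widehat{x}(h_0)$, which is inequality \ref{ih5}, and Proposition \ref{prop1} then gives $(a_1,a_2,a_3)\in O_1$.

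The only delicate point — and the place where a careless argument could go wrong — is that Proposition \ref{prop1} and Lemma \ref{lemma0} behind it are phrased for $h\in(1/2,3/4)$, whereas $h_0$ can exceed $3/4$ for small triples (for instance $k+l+m=5$ gives $h_0=5/6$). I would remove this worry by observing that hypothesis \ref{ih6} itself forbids small $k+l+m$: from $k+l\ge\frac23(k+l+m)$ (a consequence of $m\le l\le k$), inequality \ref{ih6} forces $\frac{k+l+m}{6(k+l+m-2)}<\frac{\sqrt2-1}{2}$, whence $k+l+m$ is bounded below by an explicit constant (one finds $k+l+m\ge 11$) and so $h_0<3/4$ automatically. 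Thus whenever \ref{ih6} holds we have $h_0\in(1/2,3/4)$ and Proposition \ref{prop1} applies without reservation, while the finitely many small triples with $h_0\ge 3/4$ violate \ref{ih6} and are covered vacuously.

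In short, the corollary carries essentially no new content beyond Proposition \ref{prop1}: it replaces the triple-dependent threshold $\widehat{x}(h_0)$ by the uniform threshold $\frac{\sqrt2-1}{2}=\min_h\widehat{x}(h)$, trading sharpness for a hypothesis \ref{ih6} that no longer involves $\widehat{x}$ and is therefore far easier to test against explicit $(k,l,m)$. Accordingly I expect the real work to lie not here but in the later verification, for families of triples, that \ref{ih6} (or its $O_3$ analogue) actually holds, which is where the infinitude statements of Theorems \ref{ems} and \ref{emsn} will be extracted.
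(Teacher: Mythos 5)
Your proposal is correct and follows essentially the same route as the paper: both chain the hypothesis with the uniform bound $\widehat{x}(h)\geq\frac{\sqrt{2}-1}{2}$, obtained from the minimum of $\widehat{x}$ at $h^{\ast}=\frac{3\sqrt{2}-2}{4}$, and then invoke Proposition~\ref{prop1}. Your additional check that hypothesis~(\ref{ih6}) forces $k+l+m$ to be large enough that $h_0\in(1/2,3/4)$ is a welcome clarification of a point the paper handles only implicitly in its earlier discussion of small triples.
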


\begin{pred}\label{prop1n}
If $k\geq l \geq m$, $l\geq 2$, $k+m+l \geq 7$,  and
\begin{equation*}
m> \sqrt{2k+2l-4},
\end{equation*}
then $(a_1,a_2,a_3) \in O_1$.
\end{pred}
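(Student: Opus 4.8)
The plan is to derive this Proposition directly from Proposition~\ref{prop1}: I will show that the hypothesis $m>\sqrt{2k+2l-4}$ is in fact \emph{equivalent} to the inequality~(\ref{ih5}), after which Proposition~\ref{prop1} delivers the conclusion $(a_1,a_2,a_3)\in O_1$ at once. Since $k\geq l\geq m$ forces $a_1\geq a_2\geq a_3$, the coordinate $a_3$ is the smallest, so the hypothesis of Proposition~\ref{prop1} (which is the statement $a_3>h_0-2\widehat{x}(h_0)$) is exactly what governs membership in $IT(h_0)\subset O_1$. Thus the whole task reduces to a single scalar inequality, and the work is purely algebraic; no new geometric input about $\Omega$ is needed beyond what Proposition~\ref{prop1} already provides.

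Concretely, I would set $n:=k+l+m$, so that $h_0=\frac{n}{2(n-2)}$ and $k+l=n-m$. Using the explicit form~(\ref{ih2}), inequality~(\ref{ih5}) reads $\frac{2(k+l)}{n-2}<8\widehat{x}(h_0)=6h_0-1-\sqrt{4h_0^2+4h_0-3}$, which I rearrange into the form
\[
\sqrt{4h_0^2+4h_0-3}\;<\;6h_0-1-\frac{2(k+l)}{n-2}.
\]
The rational part on the right simplifies cleanly: substituting $h_0=\frac{n}{2(n-2)}$ gives $6h_0-1=\frac{2n+2}{n-2}$ and $\frac{2(k+l)}{n-2}=\frac{2(n-m)}{n-2}$, whose difference is $\frac{2(m+1)}{n-2}$, a manifestly positive quantity. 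This positivity is what legitimizes squaring in the final step.

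The crux is the radicand. A short computation of the numerator $n^2+2n(n-2)-3(n-2)^2=8n-12$ yields the key identity
\[
4h_0^2+4h_0-3=\frac{4(2n-3)}{(n-2)^2},
\qquad\text{hence}\qquad
\sqrt{4h_0^2+4h_0-3}=\frac{2\sqrt{2n-3}}{\,n-2\,}.
\]
The inequality therefore collapses to $\frac{2\sqrt{2n-3}}{n-2}<\frac{2(m+1)}{n-2}$, i.e. $\sqrt{2n-3}<m+1$; squaring (valid by the positivity noted above) gives $2n-3<(m+1)^2$, which unwinds to $2(k+l)-4<m^2$, that is, $m>\sqrt{2k+2l-4}$. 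Reading the chain backwards shows the hypothesis implies~(\ref{ih5}), and Proposition~\ref{prop1} finishes the proof. I expect the only delicate point to be the simplification of $4h_0^2+4h_0-3$ into the perfect form $\tfrac{4(2n-3)}{(n-2)^2}$ together with checking that $6h_0-1-\tfrac{2(k+l)}{n-2}>0$ so that squaring is reversible; everything else is routine bookkeeping.
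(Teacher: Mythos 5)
Your proof is correct and takes essentially the same route as the paper: both arguments reduce the Proposition to Proposition~\ref{prop1} by showing that the hypothesis $m>\sqrt{2k+2l-4}$ is equivalent to the inequality~(\ref{ih5}), and both land on the same condition $m^2-2k-2l+4>0$. The only (cosmetic) difference is in how that equivalence is verified: the paper evaluates the quadratic factor of~(\ref{ih1}) whose root is $\widehat{x}(h_0)$ and uses its monotonicity on $[0,h_0/2]$, whereas you manipulate the closed form~(\ref{ih2}) directly and simplify the radical to $\frac{2\sqrt{2n-3}}{n-2}$ -- your computations check out.
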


\begin{proof}
Since the function $x \mapsto 16x^2-(24h-4)x+8h^2-4h+1$ decreases on $[0,h/2]$ for $h\in [1/2,3/4]$ (since $24h-4\geq 16h$),
we know that the inequality (\ref{ih5}) is equivalent to
$$
16\left(\frac{k+l}{4(k+l+m-2)}\right)^2-(24h_0-4)\left(\frac{k+l}{4(k+l+m-2)}\right)+8h_0^2-4h_0+1>0,
$$
which is equivalent to $m^2-2k-2l+4>0$. Equivalently, $m> \sqrt{2k+2l-4}$. Now, it suffices to apply Proposition \ref{prop1}.
\end{proof}

\begin{remark}
Note that $\frac{k+l}{4(k+l+m-2)}\leq \widehat{x}(h_0)$ implies $(a_1,a_2,a_3) \in O_1 \cup \Omega$.
Here $(a_1,a_2,a_3) \in \Omega$, if and only if $l=k$, $a_3=\frac{m}{2(2k+m-2)}=h_0-2\widehat{x}(h_0)$,
$a_1=a_2=\frac{k}{2(2k+m-2)}=\widehat{x}(h_0)$. This property is fulfilled if and only if $(k,l,m)=(t^2+1,t^2+1,2t)$, where $3\leq t \in \mathbb{Z}$.
Note that $m=\sqrt{2k+2l-4}$ for this case.
\end{remark}

\begin{pred}\label{prop2}
If $k\geq l \geq m$, $l\geq 2$, $k+m+l \geq 7$,  and \begin{equation}\label{ih8}
\frac{m}{2(k+l+m-2)}< \widetilde{x}(h_0),
\end{equation}
then $(a_1,a_2,a_3) \in O_3$.
\end{pred}

\begin{proof}
Note that the inequality (\ref{ih8}) is equivalent to $a_3=\frac{m}{2(k+l+m-2)}<\widetilde{x}(h_0)$. It implies that
$(a_1,a_2,a_3) \in  O_3$ by Lemma \ref{lemma0}.
\end{proof}

\begin{remark}\label{remark3}
Note that $\frac{m}{2(k+l+m-2)}\leq \widetilde{x}(h_0)$ implies $(a_1,a_2,a_3) \in O_3 \cup \Omega$,
and $(a_1,a_2,a_3) \in \Omega$ if and only if $(a_1,a_2,a_3)$ is a vertex of $I(h_0)$.
\end{remark}

\begin{pred}\label{prop3}
The point $(a_1,a_2,a_3)=\left(\frac{k}{2(k+l+m-2)},\frac{l}{2(k+l+m-2)},\frac{m}{2(k+l+m-2)}\right)$ is a vertex of $I(h_0)$ if and only if $k=l=m=2$.
\end{pred}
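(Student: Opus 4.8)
The plan is to exploit the explicit description of the vertices of $I(h_0)$ recorded above. By equations (\ref{ih1})--(\ref{ih3}), each vertex of $I(h_0)$ lies on a singular ``edge'' of $\Omega$, and hence is a permutation of $(\widetilde{x},\widetilde{x},h_0-2\widetilde{x})$, where $\widetilde{x}=\widetilde{x}(h_0)$ is the root of $f(x)=16x^3-16h_0x^2+2h_0-1$ lying in $(0,h_0/3]$ (the boundary value $h_0/3$ being attained only in the degenerate case $h_0=3/4$, where $I(h_0)$ collapses to the single point $(1/4,1/4,1/4)$). Since $\widetilde{x}\le h_0/3$, we have $h_0-2\widetilde{x}\ge h_0/3\ge\widetilde{x}$, so in each vertex the coordinate that differs from the other two is the largest one.

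Because $k\ge l\ge m$ gives $a_1\ge a_2\ge a_3$, I would argue that our ordered triple can agree with a permutation of $(\widetilde{x},\widetilde{x},h_0-2\widetilde{x})$ only as $(h_0-2\widetilde{x},\widetilde{x},\widetilde{x})$; thus being a vertex is equivalent to the two conditions $a_2=a_3$ (that is, $l=m$) and $a_3=\widetilde{x}$, the remaining identity $a_1=h_0-2\widetilde{x}$ being automatic from $a_1+a_2+a_3=h_0$. Moreover $a_3=\frac{m}{2N}\le h_0/3$ holds automatically because $k\ge m$, so a root of $f$ that equals $a_3$ is necessarily $\widetilde{x}$ and not one of the other two roots of $f$. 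Hence it suffices to solve $f(a_3)=0$ under the constraint $l=m$.

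Next I would convert this into a Diophantine equation. With $l=m$ write $N=k+2m-2$, so that $a_3=\frac{m}{2N}$ and $h_0=\frac{N+2}{2N}$. Substituting into $f$ and clearing denominators reduces $f(a_3)=0$ to
\[
(k+2m-2)^2=m^2(k+m).
\]
Viewing this as a quadratic in $k$, namely $k^2-(m-2)^2k+\bigl(-m^3+4m^2-8m+4\bigr)=0$, its discriminant computes to
\[
\Delta=(m-2)^4-4\bigl(-m^3+4m^2-8m+4\bigr)=m^2\bigl((m-2)^2+4\bigr).
\]
For $k$ to be an integer, $\Delta$ must be a perfect square, hence so must $(m-2)^2+4$; writing $(m-2)^2+4=s^2$ gives $\bigl(s-|m-2|\bigr)\bigl(s+|m-2|\bigr)=4$, whose only solution in nonnegative integers (both factors positive and of equal parity) forces $m=2$ and $s=2$. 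Substituting $m=2$ back yields $k^2=4$, i.e. $k=2$, so $k=l=m=2$. Conversely, for $(k,l,m)=(2,2,2)$ one has $h_0=3/4$ and $(a_1,a_2,a_3)=(1/4,1/4,1/4)$, which is precisely the (degenerate) vertex of $I(3/4)$, giving the ``if'' direction.

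The symbolic reduction of $f(a_3)=0$ and the discriminant computation are routine. The one genuinely delicate point is the first step: correctly identifying the vertices of $I(h_0)$ and, in particular, which of their coordinates is the largest, together with checking that $a_3\le h_0/3$ excludes the spurious roots of $f$. This is exactly what lets the ordering $a_1\ge a_2\ge a_3$ force $l=m$ (rather than $k=l$) and so pin the whole argument down to the single Diophantine equation above.
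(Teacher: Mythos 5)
Your proposal is correct and follows essentially the same route as the paper: identify the vertex as $(h_0-2\widetilde{x},\widetilde{x},\widetilde{x})$ using the ordering $a_1\ge a_2\ge a_3$, deduce $l=m$, reduce $f(a_3)=0$ to the quadratic $k^2-(m-2)^2k-m^3+4m^2-8m+4=0$, and use that its discriminant $m^2\bigl((m-2)^2+4\bigr)$ is a perfect square only for $m=2$. You merely supply the details (why the distinct coordinate of the vertex is the largest, and the factorization $(s-|m-2|)(s+|m-2|)=4$) that the paper leaves as ``clearly'' and ``easy to see''.
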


\begin{proof} Obviously, if $k=l=m=2$, the point $(a_1,a_2,a_3)$ is a vertex of $I(3/4)$ (because it is an unique point in this degenerate triangle). On the other hand, assume that $k\geq l\geq m$ and $(a_1,a_2,a_3)$ is a vertex of $I(h_0)$. Clearly $(a_1,a_2,a_3)=(h_0-2\widetilde{x},\widetilde{x},\widetilde{x})$. Hence, $m=l$ and
$k^2-(l-2)^2k-l^3+4l^2-8l+4=0$. It is easy to see that $k=l=2$ since the discriminant is $l^2((l-2)^2+4)$.
\end{proof}

\begin{pred}\label{prop4}
If $k\geq l \geq m$, $l\geq 2$, $k+m+l \geq 7$,  and
\begin{equation*}\label{ih9}
m<\Bigl(1+\sqrt{k+l}\Bigr)\frac{k+l-2}{k+l-1},
\end{equation*}
then $(a_1,a_2,a_3) \in O_3$.
\end{pred}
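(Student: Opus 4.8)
The plan is to reduce the statement to Proposition~\ref{prop2}, which guarantees $(a_1,a_2,a_3)\in O_3$ as soon as the smallest coordinate satisfies $a_3=\frac{m}{2(k+l+m-2)}<\widetilde{x}(h_0)$. Thus the entire task is to show that the hypothesis $m<\bigl(1+\sqrt{k+l}\bigr)\frac{k+l-2}{k+l-1}$ forces this inequality on $a_3$.

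First I would convert the condition $a_3<\widetilde{x}(h_0)$ into a sign condition on the cubic $f$ from (\ref{ih3}). Recall that $f(x)=16x^3-16h_0x^2+2h_0-1$ decreases on $[0,2h_0/3]$, that $\widetilde{x}(h_0)$ is its unique root in $(0,h_0/3)$, and that $a_3\le h_0/3$ because $a_3$ is the smallest of three numbers summing to $h_0$. Hence both $a_3$ and $\widetilde{x}(h_0)$ lie in the interval $[0,2h_0/3]$ on which $f$ is strictly decreasing, so $a_3<\widetilde{x}(h_0)$ is equivalent to $f(a_3)>0$. This equivalence is the only genuinely conceptual step; everything after it is algebra.

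Next I would substitute $a_3=\frac{m}{2N}$ and $h_0=\frac{N+2}{2N}$ with $N:=k+l+m-2$ into $f$. A direct computation gives $f(a_3)=\frac{2}{N^3}\bigl((k+l+m-2)^2-(k+l)m^2\bigr)$, so $f(a_3)>0$ is equivalent to the polynomial inequality $(k+l+m-2)^2>(k+l)m^2$. Writing $s:=k+l$ and expanding, this becomes $(s-1)m^2-2(s-2)m-(s-2)^2<0$, a quadratic in $m$ opening upward (note $s=k+l\ge 4$, so all coefficients have the expected signs).

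Finally I would solve this quadratic: its discriminant is $4(s-2)^2s$, giving roots $\frac{(s-2)(1\pm\sqrt{s})}{s-1}$. The lower root is negative while $m\ge 1>0$, so the quadratic is negative precisely when $m<\frac{(s-2)(1+\sqrt{s})}{s-1}=\bigl(1+\sqrt{k+l}\bigr)\frac{k+l-2}{k+l-1}$, which is exactly the hypothesis. This closes the chain $m<\bigl(1+\sqrt{k+l}\bigr)\frac{k+l-2}{k+l-1}\Rightarrow f(a_3)>0\Rightarrow a_3<\widetilde{x}(h_0)\Rightarrow (a_1,a_2,a_3)\in O_3$. The main obstacle here is not difficulty but care: one must verify that $a_3$ falls on the decreasing branch of $f$ and on the correct side of the relevant root, and keep track of which of the three roots of $f$ is $\widetilde{x}(h_0)$. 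The boundary case $k=l=m=2$, where equality holds and the point becomes a vertex of $I(h_0)$ (cf. Proposition~\ref{prop3}), is consistently excluded by the strict hypothesis.
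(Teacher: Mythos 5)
Your proof is correct and follows essentially the same route as the paper's: reduce to Proposition~\ref{prop2} by converting $a_3<\widetilde{x}(h_0)$ into $f(a_3)>0$ via the monotonicity of $f$ on $[0,2h_0/3]$, then solve the resulting quadratic in $m$. Your explicit factorization $f(a_3)=\frac{2}{N^3}\bigl((k+l+m-2)^2-(k+l)m^2\bigr)$ is just a cleaner form of the same quadratic inequality the paper writes out.
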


\begin{proof}
Recall that $f(x)$ decreases for $x\in [0,2h/3]$. Since $\frac{m}{2(k+l+m-2)}\leq \frac{1}{3}\,h_0=\frac{k+l+m}{6(k+l+m-2)}$ and
$\widetilde{x}(h_0) \leq h_0/3$, the inequality (\ref{ih8}) is equivalent to $f\left(\frac{m}{2(k+l+m-2)}\right)>0$, i.e.
$$(k+l-1)m^2-2(k+l-2)m-k^2-2kl-l^2+4k+4l-4<0.$$ That is, $m<\Bigl(1+\sqrt{k+l}\Bigr)\frac{k+l-2}{k+l-1}.$
\end{proof}
\smallskip

Note that the function $\eta(x)=\bigl(1+\sqrt{x}\bigr)\frac{x-2}{x-1}$ increases for $x>1$ and
$\eta(3)\approx1.366$, $\eta(4)=2$, $\eta(5)\approx2.427$, $\eta(6)\approx2.759$, $\eta(7)\approx3.038$.
This implies

\begin{cor}\label{cor3}
If $k\geq l \geq m$, $l\geq 2$, and $k+m+l \geq 7$,  then $(a_1,a_2,a_3) \in O_3$ if
\begin{enumerate}
\item  $m=1$;
\item  $m=2$ and $k+l\geq 5$;
\item  $m=3$ and $k+l \geq 7$.
\end{enumerate}
\end{cor}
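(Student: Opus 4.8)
The plan is to read off Corollary~\ref{cor3} as three direct numerical applications of Proposition~\ref{prop4}. Recall that Proposition~\ref{prop4} guarantees $(a_1,a_2,a_3)\in O_3$ whenever $m<\eta(k+l)$, where $\eta(x)=\bigl(1+\sqrt{x}\bigr)\frac{x-2}{x-1}$. Since we are given $k\geq l\geq m$ with $l\geq 2$, the quantity $k+l$ is always at least $4$, so $\eta$ is evaluated at arguments $x\geq 4$ where, as noted just before the corollary, $\eta$ is increasing. The strategy for each of the three cases is therefore the same: fix the stated value of $m$, find the threshold value of $k+l$ at which $\eta(k+l)$ first exceeds $m$, and verify that the stated hypothesis on $k+l$ forces $\eta(k+l)>m$.

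First I would handle case (1), $m=1$. Here the constraint $l\geq 2$ gives $k+l\geq 3$, but in fact $k\geq l\geq 2$ forces $k+l\geq 4$, so $\eta(k+l)\geq\eta(4)=2>1=m$ by monotonicity, and Proposition~\ref{prop4} applies. Next, for case (2), $m=2$, the hypothesis $k+l\geq 5$ gives $\eta(k+l)\geq\eta(5)\approx 2.427>2=m$, again by monotonicity, so $(a_1,a_2,a_3)\in O_3$. Finally, for case (3), $m=3$, the hypothesis $k+l\geq 7$ gives $\eta(k+l)\geq\eta(7)\approx 3.038>3=m$. In each case the strict inequality $m<\eta(k+l)$ is exactly the hypothesis of Proposition~\ref{prop4}, which yields the conclusion.

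The only point requiring genuine care is the monotonicity claim for $\eta$ together with the numerical boundary values $\eta(4)=2$, $\eta(5)\approx 2.427$, and $\eta(7)\approx 3.038$, all of which are already asserted in the sentence preceding the corollary; these are the facts I would cite rather than rederive. One should also double-check that the threshold in each case is sharp enough that a smaller value of $k+l$ is genuinely excluded by the stated hypothesis—for instance in case (2) the value $k+l=4$ gives $\eta(4)=2$, which is \emph{not} strictly greater than $m=2$, explaining precisely why the hypothesis demands $k+l\geq 5$; and in case (3), $k+l=6$ gives $\eta(6)\approx 2.759<3$, which is why $k+l\geq 7$ is required. Thus the only ``obstacle'' is the purely arithmetic verification that each stated bound on $k+l$ lies strictly above the corresponding threshold, which the tabulated values of $\eta$ settle immediately.
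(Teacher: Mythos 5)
Your proposal is correct and is essentially identical to the paper's own argument: the corollary is stated immediately after the sentence recording that $\eta$ is increasing and giving the values $\eta(4)=2$, $\eta(5)\approx 2.427$, $\eta(7)\approx 3.038$, and the intended proof is exactly the three numerical applications of Proposition~\ref{prop4} that you carry out. Nothing is missing.
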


Since $\sqrt{x}\leq \bigl(1+\sqrt{x}\bigr)\frac{x-2}{x-1}$ for $x\geq 4$, we have the following corollary from Proposition \ref{prop4}.

\begin{cor}\label{cor4}
If $k\geq l \geq m$, $l\geq 2$, $k+m+l \geq 7$, and
\begin{equation*}\label{ih9}
m<\sqrt{k+l},
\end{equation*}
then $(a_1,a_2,a_3) \in O_3$.
\end{cor}

Now, we are ready to prove Theorem \ref{ems},  Theorem \ref{emsn}, and Theorem \ref{emso}.
\smallskip

\begin{proof}[Proof of Theorem \ref{ems}]
From Proposition \ref{prop1n} and Corollary \ref{cor4} we get that
$(a_1,a_2,a_3) \in O_1$ for $m> \sqrt{2k+2l-4}$ and $(a_1,a_2,a_3) \in O_3$ for $m<\sqrt{k+l}$ (all cases with $k+m+l <7$ are discussed in the previous section).
Now, it suffices to apply Theorem \ref{odomains},
taking in mind that singular points for the normalized Ricci flow are exactly invariant Einstein metrics of fixed volume.
\end{proof}
\smallskip

\begin{proof}[Proof of Theorem \ref{emso}]
For $(k,l,m)=(t^2+1,t^2+1,2t)$, where $t\in \mathbb{N}$, we get
\begin{eqnarray*}
(a_1,a_2,a_3)=\left(\frac{k}{2(k+l+m-2)},\frac{l}{2(k+l+m-2)},\frac{m}{2(k+l+m-2)}\right)=\\
\left(\frac{t^2+1}{4t(t+1)},\frac{t^2+1}{4t(t+1)},\frac{1}{2(t+1)}\right)\in \Omega
\end{eqnarray*}
by (\ref{singval2}) and direct calculations. Alternatively, it suffices to check that $G(t^2+1,t^2+1,2t)=0$, see~(\ref{equag2}).
Then Theorem~\ref{emso} follows from this observation, Proposition \ref{prop1n}, and Corollary \ref{cor4}.
\end{proof}

\begin{remark}\label{remim}
It is easy to check  using of {\rm(}\ref{two_equat_sing1}{\rm)} that $SO\bigl(2(t^2+t+1)\bigr)/ SO(t^2+1)\times SO(t^2+1)\times SO(2t)$,
the generalized Wallach space with $(a_1,a_2,a_3)=\left(\frac{t^2+1}{4t(t+1)},\frac{t^2+1}{4t(t+1)},\frac{1}{2(t+1)}\right)$,
has exactly three Einstein metrics {\rm(}up to constant multiple{\rm)} for $t \geq 2$, whereas $SO(6)/SO(2)\times SO(2)\times SO(2)$ has unique
{\rm(}up to a homothety{\rm)}
Einstein invariant metric. Indeed, for the case $a_1=a_2$, subtracting the second equation by the first equation in $\eqref{two_equat_sing1}$, we have
\begin{equation}\label{remark5-1}2a_1(a_1+2a_3)(x_1^2-x_2^2)-(a_1+2a_3)x_3(x_1-x_2)=0;\end{equation} adding the first equation into the second equation in $\eqref{two_equat_sing1}$, we have
\begin{equation}\label{remark5-2}-2a_1a_3(x_1^2+x_2^2)+2a_1(a_1+a_3)x_3^2+2a_1x_1x_2-a_1(x_1+x_2)x_3=0.\end{equation}
By the equation~$\eqref{remark5-1}$, we have $$x_1=x_2, \text{ or }  2a_1(x_1+x_2)=x_3.$$
Putting $x_1=x_2$ and $(a_1,a_2,a_3)=\left(\frac{t^2+1}{4t(t+1)},\frac{t^2+1}{4t(t+1)},\frac{1}{2(t+1)}\right)$ into the equation~$\eqref{remark5-2}$, we have $$(2tx_1-(t+1)x_3)^2=0.$$ That is, we have the solution $$(x_1,x_2,x_3)=\Bigl( t+1,t+1,2t \Bigr)$$ up to a homothety. Putting $2a_1(x_1+x_2)=x_3$ and  $(a_1,a_2,a_3)=\left(\frac{t^2+1}{4t(t+1)},\frac{t^2+1}{4t(t+1)},\frac{1}{2(t+1)}\right)$ into the equation~$\eqref{remark5-2}$, we have the following solutions $$
\left(2t^2+(t^2-1)\sqrt{\frac{t}{t+2}}\,, 2t^2-(t^2-1)\sqrt{\frac{t}{t+2}}\,, \frac{2t(t^2+1)}{t+1} \right),
$$
$$
\left(2t^2-(t^2-1)\sqrt{\frac{t}{t+2}}\,, 2t^2+(t^2-1)\sqrt{\frac{t}{t+2}}\,, \frac{2t(t^2+1)}{t+1} \right).
$$
See details in \cite{Lomshakov2}. It is clear that the second and the third metrics are isometric {\rm(}in fact, one need only to permute $x_1$ and $x_2${\rm)}.
Note that $\lim\limits_{t \to \infty}\frac{x_3}{x_1}$ is equal to $2$, $2/3$, and $2$ for these three families, whereas
$\lim\limits_{t \to \infty}\frac{x_3}{x_2}$ is equal to $2$, $2$, and $2/3$ respectively.
\end{remark}

\begin{proof}[Proof of Theorem \ref{emsn}]
The proof for $q=2$ and $q=4$ follows directly from Theorem \ref{ems}.
The proof for $q=3$ follows from Remark \ref{remim}: every space
$SO\bigl(2(t^2+t+1)\bigr)/ SO(t^2+1)\times SO(t^2+1)\times SO(2t)$ with $t\geq 2$ has exactly three invariant Einstein metrics up to a homothety.
\end{proof}
\smallskip

Finally, we propose the following interesting open questions.

\begin{ques} Is there a triple $(k,l,m)$ with $G(k,l,m)=0$ such that the corresponding space $SO(k+l+m)/SO(k)\times SO(l)\times SO(m)$ admits
even numbers of Einstein metrics?
\end{ques}

\begin{ques} Is $(k,l,m)=(2,2,2)$ a unique triple with $G(k,l,m)=0$ and $k\geq l \geq 2$,
such that the corresponding space $SO(k+l+m)/SO(k)\times SO(l)\times SO(m)$ admits
exactly one Einstein metric?
\end{ques}

\section*{Appendix}

Here we consider a detailed proof of Lemma \ref{extrval1}. All computations below were produced
using Maple (of course, it is possible to apply any other system of symbolic computations).
\smallskip

Without loss of generality, we assume that $i=1$ and $j=2$.
Suppose that the lemma is not true, i.~e. $(a_1-a_2)(a_1-a_3)(a_2-a_3)\neq 0$.

Direct calculations (see see (\ref{singval2})) show that
$\frac{\partial Q}{\partial a_1}- \frac{\partial Q}{\partial a_2}=(a_2-a_1)\left(\frac{\partial Q}{\partial s_2}+
a_3\frac{\partial Q}{\partial s_3}\right)=8(a_1-a_2)\cdot Q_{12}$, where

\begin{eqnarray*}
Q_{12}= 8192a_1^5a_2^3a_3^2+12288a_1^5a_2^2a_3^3+4096a_1^5a_2a_3^4+16384a_1^4a_2^4a_3^2+36864a_1^4a_2^3a_3^3\\
+28672a_1^4a_2^2a_3^4+8192a_1^4a_2a_3^5+8192a_1^3a_2^5a_3^2+36864a_1^3a_2^4a_3^3+49152a_1^3a_2^3a_3^4\\
+24576a_1^3a_2^2a_3^5+4096a_1^3a_2a_3^6+12288a_1^2a_2^5a_3^3+28672a_1^2a_2^4a_3^4+24576a_1^2a_2^3a_3^5\\
+8192a_1^2a_2^2a_3^6+4096a_1a_2^5a_3^4+8192a_1a_2^4a_3^5+4096a_1a_2^3a_3^6-2048a_1^5a_2^3-2304a_1^5a_2^2a_3\\
-1024a_1^5a_2a_3^2-768a_1^5a_3^3-4096a_1^4a_2^4-8960a_1^4a_2^3a_3-9472a_1^4a_2^2a_3^2-3840a_1^4a_2a_3^3\\
+256a_1^4a_3^4-2048a_1^3a_2^5-8960a_1^3a_2^4a_3-16896a_1^3a_2^3a_3^2-10752a_1^3a_2^2a_3^3+1024a_1^3a_2a_3^4\\
+2304a_1^3a_3^5-2304a_1^2a_2^5a_3-9472a_1^2a_2^4a_3^2-10752a_1^2a_2^3a_3^3+1536a_1^2a_2^2a_3^4+5888a_1^2a_2a_3^5\\
+1280a_1^2a_3^6-1024a_1a_2^5a_3^2-3840a_1a_2^4a_3^3+1024a_1a_2^3a_3^4+5888a_1a_2^2a_3^5+2560a_1a_2a_3^6\\
-768a_2^5a_3^3+256a_2^4a_3^4+2304a_2^3a_3^5+1280a_2^2a_3^6+1024a_1^4a_2^3+1152a_1^4a_2^2a_3+512a_1^4a_2a_3^2\\
+384a_1^4a_3^3+1024a_1^3a_2^4+2304a_1^3a_2^3a_3+3072a_1^3a_2^2a_3^2+1024a_1^3a_2a_3^3-512a_1^3a_3^4\\
+1152a_1^2a_2^4a_3+3072a_1^2a_2^3a_3^2+1280a_1^2a_2^2a_3^3-1024a_1^2a_2a_3^4-640a_1^2a_3^5+512a_1a_2^4a_3^2\\
+1024a_1a_2^3a_3^3-1024a_1a_2^2a_3^4-1280a_1a_2a_3^5+384a_2^4a_3^3-512a_2^3a_3^4-640a_2^2a_3^5+208a_1^5a_2\\
+176a_1^5a_3+736a_1^4a_2^2+656a_1^4a_2a_3+144a_1^4a_3^2+1056a_1^3a_2^3+896a_1^3a_2^2a_3-624a_1^3a_2a_3^2\\
-560a_1^3a_3^3+736a_1^2a_2^4+896a_1^2a_2^3a_3-1632a_1^2a_2^2a_3^2-2576a_1^2a_2a_3^3-880a_1^2a_3^4+208a_1a_2^5\\
+656a_1a_2^4a_3-624a_1a_2^3a_3^2-2576a_1a_2^2a_3^3-1888a_1a_2a_3^4-384a_1a_3^5+176a_2^5a_3+144a_2^4a_3^2\\
-560a_2^3a_3^3-880a_2^2a_3^4-384a_2a_3^5-32a_3^6-208a_1^4a_2-176a_1^4a_3-528a_1^3a_2^2-272a_1^3a_2a_3\\
+32a_1^3a_3^2-528a_1^2a_2^3-96a_1^2a_2^2a_3+864a_1^2a_2a_3^2+528a_1^2a_3^3-208a_1a_2^4-272a_1a_2^3a_3\\
+864a_1a_2^2a_3^2+1184a_1a_2a_3^3+352a_1a_3^4-176a_2^4a_3+32a_2^3a_3^2+528a_2^2a_3^3+352a_2a_3^4+32a_3^5\\
+40a_1^4+212a_1^3a_2+148a_1^3a_3+320a_1^2a_2^2+284a_1^2a_2a_3+20a_1^2a_3^2+212a_1a_2^3+284a_1a_2^2a_3\\
+8a_1a_2a_3^2-88a_1a_3^3+40a_2^4+148a_2^3a_3+20a_2^2a_3^2-88a_2a_3^3-24a_3^4-60a_1^3-180a_1^2a_2\\
-96a_1^2a_3-180a_1a_2^2-192a_1a_2a_3-12a_1a_3^2-60a_2^3-96a_2^2a_3-12a_2a_3^2+24a_3^3+30a_1^2\\
+60a_1a_2+18a_1a_3+30a_2^2+18a_2a_3-12a_3^2-5a_1-5a_2+2a_3\\
\end{eqnarray*}

By the condition of the lemma we have $Q(a_1,a_2,a_3)=Q_{12}(a_1,a_2,a_3)=0$.
Let us consider the resultant $R_1$ of the polynomials $Q$ and $Q_{12}$ with respect to $a_1$.
By direct computations we have
\begin{eqnarray*}
R_1=R_1(t,s)=-17592186044416\,s^3t(2t-1)^2(2s-1)^4(2t-1+2s)(s-t)^3(t+s)^2\\
\times(16s^3+16s^2t-4s-2t+1)^3 (8s^2t+8st^2+8s^2+20st+8t^2+12s+12t+5)\\
\times(64s^2t+64st^2-32s^2-80st-32t^2+24s+24t-5)^2 \cdot L^3\,,
\end{eqnarray*}
where $t=a_2$, $s=a_3$ for simplicity, and
\begin{eqnarray*}
L=16384s^{10}t^4+32768s^9t^5-16384s^8t^6-65536s^7t^7-16384s^6t^8+32768s^5t^9\\
+16384s^4t^{10}-13312s^{10}t^2-30720s^9t^3-8192s^8t^4+30720s^7t^5+43008s^6t^6\\
+30720s^5t^7-8192s^4t^8-30720s^3t^9-13312s^2t^{10}+6656s^9t^2+15872s^8t^3\\
+2560s^7t^4-25088s^6t^5-25088s^5t^6+2560s^4t^7+15872s^3t^8+6656s^2t^9+3136s^{10}\\
+7936s^9t+6016s^8t^2-384s^7t^3-9152s^6t^4-15104s^5t^5-9152s^4t^6-384s^3t^7\\
+6016s^2t^8+7936st^9+3136t^{10}-3136s^9-7712s^8t-4768s^7t^2+4384s^6t^3\\
+11232s^5t^4+11232s^4t^5+4384s^3t^6-4768s^2t^7-7712st^8-3136t^9+368s^8\\
+272s^7t-192s^6t^2-272s^5t^3-352s^4t^4-272s^3t^5-192s^2t^6+272st^7+368t^8\\
+736s^7+1872s^6t+336s^5t^2-2944s^4t^3-2944s^3t^4+336s^2t^5+1872st^6+736t^7\\
-408s^6-616s^5t+520s^4t^2+1440s^3t^3+520s^2t^4-616st^5-408t^6+56s^5-80s^4t\\
-408s^3t^2-408s^2t^3-80st^4+56t^5+40s^4+164s^3t+240s^2t^2+164st^3+40t^4\\
-30s^3-78s^2t-78st^2-30t^3+7s^2+13st+7t^2.
\end{eqnarray*}

We should have $R_1(s,t)=0$. Note that $a_3=s=t=a_2$ is impossible.
It is easy to see that
\begin{eqnarray*}
s^3t(2t-1)^2(2s-1)^4(t+s)^2&>& 0, \\
8s^2t+8st^2+8s^2+20st+8t^2+12s+12t+5&> &0,\\
64s^2t+64st^2-32s^2-80st-32t^2+24s+24t-5&<&0
\end{eqnarray*}
for $(t,s)\in (0,1/2)\times (0,1/2)$. It is not difficult to show also that $L> 0$ for
$(t,s)\in (0,1/2)\times (0,1/2)$
(the minimal value of $L$ on $[0,1/2]\times [0,1/2]$ is $0$ and it is attained  only at the points $(0,0)$, $(0,1/2)$, $(1/2,0)$, and $(1/2,1/2)$).
Therefore, we get either $2t+2s=1$ or $16s^3+16s^2t-4s-2t+1=0$.

\smallskip

{\bf Suppose that $s=1/2-t$, $t\in (0,1/2)$.} Then
$Q=4a_1 \cdot Q_{c1}$, where

\begin{eqnarray*}
Q_{c1}=-1024a_1^5s^4+2048a_1^3s^6-1024a_1s^8+1024a_1^5s^3-512a_1^4s^4\\
-3072a_1^3s^5+2048a_1s^7+512s^8+320a_1^5s^2+512a_1^4s^3+1152a_1^3s^4\\
-1984a_1s^6-1024s^7-288a_1^5s-192a_1^4s^2+128a_1^3s^3+448a_1^2s^4+1184a_1s^5\\
+768s^6+44a_1^5+32a_1^4s-328a_1^3s^2-448a_1^2s^3-452a_1s^4-256s^5-4a_1^4\\
+116a_1^3s+196a_1^2s^2+108a_1s^3+32s^4-13a_1^3-42a_1^2s-13a_1s^2+4a_1^2.
\end{eqnarray*}

$Q_{12}=8a_1(2s+2a_1-1)\cdot Q_{12c1}$, where

\begin{eqnarray*}
Q_{12c1}=-1024a_1^4s^4+1024a_1^3s^5+1024a_1^2s^6-1024a_1s^7+1536a_1^4s^3-1792a_1^3s^4\\
-2304a_1^2s^5+1792a_1s^6+768s^7+128a_1^4s^2+640a_1^3s^3+1536a_1^2s^4-1152a_1s^5\\
-1664s^6-464a_1^4s-48a_1^3s^2-80a_1^2s^3+816a_1s^4+1184s^5+76a_1^4-36a_1^3s\\
-396a_1^2s^2-476a_1s^3-320s^4+4a_1^3+192a_1^2s+124a_1s^2+24s^3-23a_1^2-11a_1s.
\end{eqnarray*}

If $2s+2a_1-1=0$, then $a_1=t=a_2$, that is impossible.

Let us consider the resultant $R_2$ of the polynomials $Q_{c1}$ and $Q_{12c1}$ with respect to $a_1$.
Direct computations implies
\begin{eqnarray*}
R_2=R_2(s)=-2048\,s^6(8s^2-4s-13)(2s-1)^4(4s-1)^{10}\\
\times (4096s^6-6144s^5-768s^4+3328s^3-192s^2-384s+91)^3.
\end{eqnarray*}

It is easy to check that $8s^2-4s-13<0$ and $4096s^6-6144s^5-768s^4+3328s^3-192s^2-384s+91>0$ for $s\in (0,1/2)$.
If $s=1/4$, then $Q_{c1}=\frac{1}{128}(2a_1+1)(4a_1-1)^4$, hence, $a_1=1/4=s=a_3$. Therefore, we have no critical points with pairwise
distinct $a_1,a_2,a_3$ in this case.

\smallskip

{\bf Now, suppose that  $16s^3+16s^2t-4s-2t+1=16s^3-4s+1-2t(1-8s^2)=0$.} Then we get $t =\frac{1-4s+16s^3}{2(1-8s^2)}$,
where $s\in \left(0, \frac{\sqrt{5}-1}{4}\approx 0.3090169942\right)$ due to $t\in(0,1/2)$.
Under the above substitution we get that $Q=\frac{4(a_1-s)^2}{(8s^2-1)^6}\cdot Q_{c2}$, where

\begin{eqnarray*}
Q_{c2}= -33554432a_1^4s^{14}-33554432a_1^3s^{15}+33554432a_1^2s^{16}+33554432a_1s^{17}\\
+16777216a_1^4s^{13}+33554432a_1^3s^{14}-33554432a_1s^{16}-16777216s^{17}+41943040a_1^4s^{12}\\
+25165824a_1^3s^{13}-50331648a_1^2s^{14}-25165824a_1s^{15}+8388608s^{16}-26214400a_1^4s^{11}\\
-47185920a_1^3s^{12}+9437184a_1^2s^{13}+47185920a_1s^{14}+16777216s^{15}-14483456a_1^4s^{10}\\
+9568256a_1^3s^{11}+30867456a_1^2s^{12}-5505024a_1s^{13}-13369344s^{14}+12648448a_1^4s^9\\
+14876672a_1^3s^{10}-16580608a_1^2s^{11}-21430272a_1s^{12}-3670016s^{13}+835584a_1^4s^8\\
-8224768a_1^3s^9-2965504a_1^2s^{10}+11747328a_1s^{11}+6569984s^{12}-2506752a_1^4s^7\\
-278528a_1^3s^8+5488640a_1^2s^9+974848a_1s^{10}-1466368s^{11}+395264a_1^4s^6\\
+1318912a_1^3s^7-1906688a_1^2s^8-3170304a_1s^9-812032s^{10}+180224a_1^4s^5-370688a_1^3s^6\\
-126976a_1^2s^7+1191936a_1s^8+535552s^9-67072a_1^4s^4-2048a_1^3s^5+342784a_1^2s^6\\
+2816a_1s^7-117504s^8+2560a_1^4s^3+25856a_1^3s^4-113664a_1^2s^5-151168a_1s^6\\
+1408s^7+2608a_1^4s^2-7200a_1^3s^3+7152a_1^2s^4+54720a_1s^5+10112s^6-592a_1^4s\\
+816a_1^3s^2+5168a_1^2s^3-7600a_1s^4-6432s^5+44a_1^4-8a_1^3s-1672a_1^2s^2-440a_1s^3\\
+2508s^4-4a_1^3+228a_1^2s+364a_1s^2-588s^3-13a_1^2-62a_1s+75s^2+4a_1-4s\, .
\end{eqnarray*}

We also have $Q_{12}=\frac{8(a_1-s)(16a_1s^2+16s^3-2a_1-4s+1)}{(8s^2-1)^6}\cdot Q_{12c2}$, where

\begin{eqnarray*}
Q_{12c2}=
8388608a_1^4s^{13}-8388608a_1^3s^{14}-8388608a_1^2s^{15}+8388608a_1s^{16}-4194304a_1^4s^{12}\\
+8388608a_1^2s^{14}-4194304s^{16}-13107200a_1^4s^{11}+13631488a_1^3s^{12}+9961472a_1^2s^{13}\\
-11534336a_1s^{14}+1048576s^{15}+7864320a_1^4s^{10}-14680064a_1^2s^{12}+1048576a_1s^{13}\\
+5767168s^{14}+5799936a_1^4s^9-8912896a_1^3s^{10}-1277952a_1^2s^{11}+7274496a_1s^{12}\\
-2883584s^{13}-4472832a_1^4s^8+1343488a_1^3s^9+6832128a_1^2s^{10}-1376256a_1s^{11}\\
-2326528s^{12}-761856a_1^4s^7+2777088a_1^3s^8-1630208a_1^2s^9-1875968a_1s^{10}+1712128s^{11}\\
+1064960a_1^4s^6-983040a_1^3s^7-1155072a_1^2s^8+458752a_1s^9+172032s^{10}-61440a_1^4s^5\\
-282624a_1^3s^6+745472a_1^2s^7+223232a_1s^8-334336s^9-108032a_1^4s^4+210432a_1^3s^5\\
-70656a_1^2s^6-87040a_1s^7+34560s^8+20224a_1^4s^3-22144a_1^3s^4-91648a_1^2s^5+6400a_1s^6\\
+24960s^7+3200a_1^4s^2-10880a_1^3s^3+38720a_1^2s^4+3840a_1s^5-3776s^6-1128a_1^4s\\
+3520a_1^3s^2-2552a_1^2s^3-4112a_1s^4-1776s^5+76a_1^4-336a_1^3s-1620a_1^2s^2\\
+1736a_1s^3+576s^4+4a_1^3+370a_1^2s-320a_1s^2-54s^3-23a_1^2+22a_1s+s^2\, .
\end{eqnarray*}

The equality $16a_1s^2+16s^3-2a_1-4s+1=0$ implies $a_1=t=a_2$  and $a_1-s$ implies $a_1=s=a_3$, that is impossible.

Let us consider the resultant $R_3$ of the polynomials $Q_{c2}$ and $Q_{12c2}$ with respect to $a_1$.
Direct computations implies
\begin{eqnarray*}
R_3=R_3(s)=442368\,s^4(16s^3-4s+1)(64s^4-48s^3-80s^2+68s-13)(2s+1)^2\\
\times (4s^2+2s-1)^2(64s^4-32s^2+8s-1)^2(4s-1)^5(2s-1)^8(8s^2-1)^{12}\cdot M\,,
\end{eqnarray*}

where
\begin{eqnarray*}
M=67108864s^{16}-73400320s^{14}+6291456s^{13}+34799616s^{12}-6029312s^{11}\\
-8978432s^{10}+2510848s^9+1233920s^8-570368s^7-42496s^6+66496s^5\\
-11664s^4-1952s^3+1184s^2-204s+13.
\end{eqnarray*}

It is easy to see that $R_3(s)=0$ for $s\in \left(0, \frac{\sqrt{5}-1}{4}\right)$ if and only if $s=1/4$. But in this case we have $a_2=t=1/4=s=a_3$ that is
impossible. Lemma \ref{extrval1} is completely proved.

\bigskip

{\bf Acknowlegment.}
The authors are grateful to the referees for helpful comments and suggestions that improved the presentation of this paper.

\vspace{10mm}

\bibliographystyle{amsunsrt}

\vspace{5mm}

\end{document}